\theoremstyle{plain}
\newtheorem{theorem}{Theorem}[section]
\newtheorem{lemma}[theorem]{Lemma}
\newtheorem{proposition}[theorem]{Proposition}
\newtheorem{corollary}[theorem]{Corollary}
\theoremstyle{remark}
\newtheorem{example}[theorem]{Example}
\theoremstyle{remark}
\newtheorem{remark}[theorem]{Remark}
\theoremstyle{definition}
\newtheorem{definition}[theorem]{Definition}
\numberwithin{equation}{section}
\def\d{\text{dist}}
\def\D{\mathcal{V}}
\def\<{\langle}
\def\>{\rangle}
\def\F{\mathcal{F}}
\def\1{\mathbf{1}}
\def\R{\mathbb{R}}
\def\S{\mathcal{S}}
\def\H{\mathfrak{H}}
\def\E{\mathbb{E}}
\def\P{\mathbb{P}}
\def\A{\mathfrak{A}}
\def\N{\mathbb{N}}
\def\B{\mathcal{B}}
\def\s{\text{Sym}_d(\R)}
\def\C{\mathbb{C}}
\def\f{\mathfrak{F}}
\def\r{\mathbb{R}\times i\mathbb{R}^d\times \s}
\begin{document}

\title{A quasilinear approach to fully nonlinear parabolic (S)PDE\MakeLowercase{s} on $\mathbb{R}^d$}
\author{Antonio Agresti}

\address{Dipartimento di Matematica Guido Castelnuovo\\
Sapienza Universit\`a di Roma\\
P.le A. Moro 2, 00100 Roma, Italy}

\begin{abstract}
We study the Cauchy problem for fully nonlinear (stochastic) parabolic partial differential equations. We provide both in deterministic and stochastic case the existence of a maximal defined solution for the problem and we provide suitable blow-up criterion.\\
The key idea is the use of the paradifferential operator calculus in order to reduce the fully nonlinear problem into an abstract quasilinear (stochastic) parabolic equation. This allows us to use some recent results on abstract quasilinear (stochastic) evolution equations in Banach spaces. To do so, we analyse the properties of the paradifferential operator, in light of known results on the boundedness of the $\mathcal{H}^{\infty}$-calculus for pseudodifferential operator.\\
Finally, we extend the theory just developed to cover high order fully nonlinear parabolic (S)PDEs.
\end{abstract}
\maketitle
\tableofcontents
\section{Introduction}
\label{s:introduction}
In this article we provide a very general framework to deal with fully nonlinear (stochastic) parabolic partial differential equations on $\mathbb{R}^d$, but the method can be extended to more general situations (see Section \ref{s:extensions} for more on this).\\ 
\\
Fully nonlinear (stochastic) PDEs are not as studied as semilinear and quasilinear (stochastic) PDEs. In the deterministic case some methods has been developed (see \cite{DaPrato-maximal,Lunardi}) but in the stochastic case not many result on existence are available, see \cite{Stochastic,DaPrato,P.L.Lions,P.L.Lions2} for some results.\\
The narrowness of the results in stochastic case seems to be due to the fact that the semigroup (or more generally abstract) approaches are not available at first sight. For deterministic parabolic PDEs, in an abstract setting some results are proved by linearization technique in \cite{DaPrato-maximal,Lunardi} and the results in \cite{DaPrato} are indeed related to this one.\\
Our method is completely different from the above mentioned ones. Indeed, we are able to recast the fully nonlinear problem in an abstract quasilinear (stochastic) evolution equation for which many sharp existence results are now available, this is due to the recent progresses in the theory of (stochastic) maximal regularity; see \cite{QuasilinearStochastic,Pruss,StochasticMaximal,StochasticEvolution,Weis}.\\
To do this, we take advantage of paradifferential technique, which has been shown quite useful in study of fully nonlinear elliptic and parabolic equations; see \cite{Taylor,Tools}. Besides the abstract quasilinear formulation of the fully nonlinear problem, our main task is to prove that the operator thus defined has good properties as the ordinary differential operator, which are necessary to use the quasilinear abstract existence results provided in \cite{QuasilinearStochastic,Pruss}.\\
The novelty of this approach is the possibility to work with the same basic idea both in deterministic and stochastic settings. Furthermore, this approach makes available a lot of known results on abstract quasilinear evolution equation which are at first sight not intended for fully nonlinear problems.\\
We refer to Section \ref{s:comments} for further comments and comparison with the methods already available.\\
\\
Now, we turn to explain in details the strategy of the proofs, which will be worked out in the subsequent sections. 
To begin, we introduce some notations: 
\begin{itemize}
\item Multi index notation, $\alpha\in \mathbb{N}_0^d$, we set 
$$D^{\alpha}=\frac{1}{i^{|\alpha|}}\frac{\partial^{|\alpha|}}{\partial x^{\alpha_1}_1\dots x^{\alpha_n}_n}\,,$$
where $|\alpha|=\sum_{i=1}^d \alpha_i$. Furthermore we set $\xi^{\alpha}=\xi_1^{\alpha_1}\dots \xi_d^{\alpha_d}$.
\item
The map 
\begin{equation*}
F:\mathbb{R}^d \times \mathbb{R}\times i\mathbb{R}^d\times \s\longrightarrow \mathbb{R}\,,
\end{equation*}
will be denoted by $F(x,\zeta)$, where $\zeta=(\zeta_{\alpha}\,:\,|\alpha|\leq 2) \in \mathbb{R}\times i\mathbb{R}^d\times \s$ and $\s$ is the set of all symmetric matrices of order $d$.
\item Let $l$ be an integer, for each $x\in \R^d$ we define $D^l u(x)\in \r$ as 
\begin{equation*}
(D^l u(x))_ {\alpha} =D^{\alpha}u(x)\,, \qquad |\alpha|\leq 2\,;
\end{equation*}
here $D^0 u =u$.
\end{itemize}
In the first part of this section is useful to introduce the following hypotheses:
\begin{itemize}
\item[(H1)] $s>0$, $q\in (1,+\infty)$ and $q>(2+d)/s$.
\item[(H2)] The initial value $u_0$ belongs to $W^{2+s-2/q}_{q}(\R^d)$, for $s,q$ as in (H2).
\item[(H3)] $F$ is a smooth function of its arguments, its derivatives are uniformly bounded in $x$ and $F(x,0)\in H^s_q(\R^d)$ ($s,q$ are as in (H2)).
\item[(H4)](Strongly parabolicity hypothesis). For each $n\in \mathbb{N}$ there exists $c'_n>0$ such that
\begin{equation*}
\sum_{|\alpha|=2}\frac{\partial F}{\partial \zeta_{\alpha}}(x,\zeta)\, \xi^{\alpha} \geq c'_n |\xi|^2\,,\qquad \forall \xi
\in \mathbb{R}^d\,,
\end{equation*}
uniformly in $x\in \mathbb{R}^d$ and $|\zeta|\leq n$.
\end{itemize}
Here, for any $r\in (1,\infty)$ and $m\in \R$, $H^m_r(\R^d)$ and $W^m_r(\R^d)$ denote respectively the Bessel-potential spaces and the Sobolev-Slobodeckij spaces; see \cite{Interpolation,Analysis1} for more on this. For notational convenience, in the following we set $H^m_r:=H^m_r(\R^d)$ and $W^m_r:=W^m_r(\R^d)$.\\
The following theorem will be proved in Subsection \ref{ss:deterministic proof}.
\begin{theorem}
\label{t:well posedness}
If the assumptions (H1)-(H4) are satisfied, the following fully nonlinear parabolic problem
\begin{equation}
\label{EQ}
\begin{cases}
\partial_t u(x,t) + F(x,D^2u(x,t))=0\,,\qquad & x\in \mathbb{R}^d\,,\,t>0\,,\\
u(x,0)=u_0(x)\,,\qquad & x\in \mathbb{R}^d\,,
\end{cases}
\end{equation}
has a maximal defined solution $u$ in
\begin{equation}
\label{eq:regularity solution}
H^1_{q,loc}([0,T);H^s_q)\cap L_{q,loc}([0,T);H^{2+s}_q)\cap C([0,T);W^{2+s-\frac{2}{q}}_{q})\,;
\end{equation}
where $T=T(u_0)$. Furthermore, $T(u_0)=+\infty$ or $T(u_0)<+\infty$ and it is characterized by
\begin{equation}
\label{eq:lim extended}
\lim_{t\nearrow T(u_0)} u(t)\,\,\,\text{does not exist in}\,\, W^{2+s-\frac{2}{q}}_{q}\,.
\end{equation} 
\end{theorem}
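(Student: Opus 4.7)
The plan is to paralinearize $F$ and recast \eqref{EQ} as an abstract quasilinear parabolic equation on the couple $(H^s_q,H^{2+s}_q)$, which can then be solved by the general theory of \cite{Pruss,QuasilinearStochastic}. Bony's paralinearization yields a decomposition of the form
\begin{equation*}
F(x,D^2 u) \;=\; F(x,0) + T_{a(\cdot,\cdot;u)}\, u + R(u),
\end{equation*}
where $a(x,\xi;u):=\sum_{|\alpha|\le 2}\partial_{\zeta_\alpha}F(x,D^2u(x))\,\xi^{\alpha}$ is the linearization symbol, $T_{a(\cdot,\cdot;u)}$ is the associated paradifferential operator, and $R(u)$ is a lower-order remainder. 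Setting $A(u):=T_{a(\cdot,\cdot;u)}$ and $f(u):=-F(x,0)-R(u)$, equation \eqref{EQ} takes the abstract quasilinear form $\partial_t u + A(u)u=f(u)$ with $u(0)=u_0$.

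The first step is to observe that (H1) yields the embedding $W^{2+s-2/q}_q\hookrightarrow C^{2+\delta}(\R^d)$ for some $\delta>0$, so that whenever $u$ lies in the trace space the symbol $a(\cdot,\cdot;u)$ is well-defined and, thanks to (H4), uniformly strongly elliptic on bounded subsets of $W^{2+s-2/q}_q$. The main analytic step is then to prove that $A(u)$ enjoys $L^q$-maximal regularity as an unbounded operator on $H^s_q$ with domain $H^{2+s}_q$, again uniformly on bounded subsets of the trace space. This is where the boundedness of the $\mathcal{H}^\infty$-calculus for pseudodifferential operators with strongly elliptic symbols enters: the paradifferential realization $T_{a(\cdot,\cdot;u)}$ coincides, up to smoothing remainders which can be absorbed into $f(u)$, with a classical second-order pseudodifferential operator, to which the $\mathcal{H}^\infty$-calculus theory applies.

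Next, I would verify the Lipschitz hypotheses required by the abstract theorem: $u\mapsto A(u)$ locally Lipschitz from $W^{2+s-2/q}_q$ into $\mathcal{L}(H^{2+s}_q,H^s_q)$, and $u\mapsto f(u)$ locally Lipschitz from $W^{2+s-2/q}_q$ into $H^s_q$. Both reduce to continuity estimates for paradifferential operators in their symbols, together with the smoothness of $F$ postulated in (H3) and the fact that $R$ gains derivatives. Once these conditions are in place, the abstract existence results of \cite{Pruss,QuasilinearStochastic} produce a unique maximal solution $u$ in the regularity class \eqref{eq:regularity solution} on some maximal interval $[0,T(u_0))$; the standard blow-up alternative furnished by that theorem—nonextendability at $T(u_0)<+\infty$—is equivalent to \eqref{eq:lim extended}, because the existence of the limit in the trace space would permit restarting the problem at time $T(u_0)$ and contradict maximality.

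The main obstacle is the maximal-regularity assertion for $A(u)$: one must control the joint regularity of the symbol $a(\cdot,\cdot;u)$ in $x$ and in $u$, verify that it fits into the pseudodifferential $\mathcal{H}^\infty$-calculus framework on $H^s_q$, and obtain all the resulting bounds uniformly on bounded subsets of the trace space. Relative to this, the remainder estimates and the final passage to the blow-up criterion \eqref{eq:lim extended} should be routine applications of the abstract machinery.
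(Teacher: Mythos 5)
Your proposal follows essentially the same route as the paper: paralinearize $F$ into a strongly elliptic paradifferential operator plus a lower-order/smoothing remainder, establish uniform maximal $L_q$-regularity on $H^s_q$ via the bounded $\mathcal{H}^\infty$-calculus for strongly elliptic pseudodifferential symbols (handling the rough part of the symbol by symbol smoothing), verify local Lipschitz continuity of $u\mapsto A(u)$ and $u\mapsto f(u)$ on the trace space, and invoke the abstract quasilinear theory of Pr\"uss--Simonett, with the blow-up alternative obtained by the standard restart-and-compactness argument. The only cosmetic difference is your use of the classical Bony paralinearization versus the paper's telescoping (Meyer/Taylor) construction of $M_F(u;x,\xi)$, which does not change the argument.
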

The main idea for the proof of Theorem \ref{t:well posedness} is to recast the problem (\ref{EQ}) as an abstract quasilinear parabolic equation via paradifferential operator.\\
As explained in Section \ref{s:paradifferential}, for $u\in C^{2+r}_*$ with $r>0$, we may write
\begin{equation}
\label{eq:intro decomposition}
F(x,D^2u)= M_F(u;x,D)u + F(x,D^2\Psi_0(D)u)\,,
\end{equation}
pointwise. Where $\Psi_0\in C^{\infty}_0(\mathbb{R}^d)$, $\Psi_0(D)$ and $M_F(u;x,D)$ are the pseudodifferential operator  respectively with symbol $\Psi_0(\xi)$ and
\begin{equation*}
M_F(u;x,\xi)\in S^m_{1,1}\cap C^r_* S^m_{1,0}\,.
\end{equation*}
Here, $C^r_*:=C^r_*(\mathbb{R}^d):=B^r_{\infty,\infty}(\mathbb{R}^d)$ denotes the Zygmund spaces; for the definition of the symbol classes $S^m_{1,1}$, $C^r_* S^m_{1,0}$ see Definitions \ref{def:pseudodiff}-\ref{def:pseudodiff rough} and \cite{Taylor,Tools} for a comprehensive study.\\
With a slight abuse of notation, we denote by $M_F(u)$ the realization of the paradifferential operator on $H^s_q$ and $G(u):=-F(\cdot,\Psi_0(D)D^2u)$. Then the proof of Theorem \ref{t:well posedness} is equivalent to the solvability of following problem 
\begin{equation}
\label{eq:quasilinear}
\begin{cases}
u' + M_F(u)u = G(u)\,,\,\,t>0\,,\\
 u(0)=u_0\,.
\end{cases}
\end{equation} 
We will show that the problem in (\ref{eq:quasilinear}) can be handled with the theory of abstract quasilinear parabolic differential equations, as developed in Chapter 5 of \cite{Pruss}, but it will require some work.\\
To begin, recall that 
$$(H^s_q,H^{s+2}_q)_{1-1/q,q}=W^{s+2-2/q}_q\,.$$
Furthermore, for any $v\in W^{s+2-2/q}_q$ and $n\in \mathbb{N}$ define
\begin{equation*}
B_{s,q}(v,n):=\{u\in W^{s+2-2/q}_q:\,|u-v|_{W^{s+2-2/q}_q}<n\}\,.
\end{equation*} 
The following proposition plays an essential role in proving existence and a maximal defined solution for the problem in (\ref{eq:quasilinear}) via known results for abstract quasilinear parabolic problem; for the notation we refer to Subsection \ref{ss:Hinfinite and maximal}.
\begin{proposition}
\label{prop:Hinfinite M}
Let the hypotheses (H1), (H3)-(H4) be satisfied and let $n\in \mathbb{N}$ be an integer. Then there exist a $\lambda_n\in \mathbb{R}^+$ such that $\lambda_n+ M_F(u)\in \mathcal{H}^{\infty}(H^s_q)$ with $\phi^{\infty}_{M_{F}(u)}\leq \phi_n<\pi/2$ for any $u\in B_{s,q}(0,n)$.\\
Furthermore, there exists $M_n \in \mathbb{R}^+$, such that
\begin{equation}
\label{eq:prop M uniform of H infinite}
|f(\lambda_n+ M_F(u))|_{\mathcal{B}(H^s_q)} \leq M_n |f|_{H^{\infty}(\Sigma_{\phi_n})}\,,
\end{equation}
for any $f\in H^{\infty}_0(\Sigma_{\phi_n})$ and $u\in B_{s,q}(0,n)$.
\end{proposition}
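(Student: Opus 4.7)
The plan is to reduce the claim to an abstract $\mathcal{H}^{\infty}$-calculus theorem for strongly elliptic, parameter-dependent pseudodifferential operators with rough symbols of class $C^r_* S^2_{1,0}$, of the sort developed in the pseudodifferential literature (cf.\ \cite{Taylor,Tools}) and presumably collected in Subsection~\ref{ss:Hinfinite and maximal}. Two ingredients must be verified uniformly in $u\in B_{s,q}(0,n)$: (i) the symbol $M_F(u;x,\xi)$ has uniformly bounded seminorms in the relevant symbol class; and (ii) its principal symbol is uniformly strongly elliptic. Granted these, the abstract result should supply $\lambda_n$, $\phi_n<\pi/2$ and $M_n$ depending only on $n$.

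For (i), I would first fix $r>0$ small enough that the embedding $W^{s+2-2/q}_q \hookrightarrow C^{2+r}_*$ holds; this is possible since (H1) forces $s-2/q-d/q>0$. Consequently, for every $u\in B_{s,q}(0,n)$ the map $x\mapsto D^2 u(x)$ is bounded in $C^r_*$ by a constant $K=K(n)$. Combined with (H3), the coefficients $x\mapsto \partial_{\zeta}^{\beta}F(x,D^2u(x))$ are uniformly bounded in $C^r_*$ for such $u$. Because $M_F(u;x,\xi)$ is built from these coefficients via frequency localisation in the decomposition \eqref{eq:intro decomposition}, its seminorms in $S^2_{1,1}\cap C^r_* S^2_{1,0}$ are then bounded by a quantity depending only on $n$.

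For (ii), since $\|D^2u\|_{L^\infty}$ is also controlled by $n$ via the same embedding, applying (H4) pointwise yields
\[
\sum_{|\alpha|=2}\frac{\partial F}{\partial\zeta_{\alpha}}\bigl(x,D^2u(x)\bigr)\,\xi^{\alpha} \geq c''_n|\xi|^2
\]
with $c''_n>0$ independent of $x$ and of $u\in B_{s,q}(0,n)$. The principal symbol being real and positive, its numerical range lies on the positive real axis, so the sectoriality angle can be taken strictly less than $\pi/2$. Invoking the abstract $\mathcal{H}^{\infty}$ result for rough elliptic pseudodifferential operators then produces, after adding a sufficiently large $\lambda_n$ to absorb lower order contributions coming from the $C^r_* S^2_{1,0}$ part and from the $S^2_{1,1}$ remainder, the desired bound \eqref{eq:prop M uniform of H infinite}.

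The delicate point — and the main obstacle I anticipate — is that $M_n$ must be \emph{uniform} in $u\in B_{s,q}(0,n)$. This requires the $\mathcal{H}^{\infty}$ theorem to be stated quantitatively, with constants depending only on finitely many symbol seminorms and on the ellipticity constant $c''_n$, both of which have already been controlled in (i)--(ii). If the version cited in Subsection~\ref{ss:Hinfinite and maximal} is already formulated in this form, uniformity is immediate; otherwise one must trace through its proof (parametrix construction plus Kalton--Weis type $R$-boundedness arguments) to verify continuous dependence of every constant on the symbol data, and then apply that quantitative version pointwise in $u$.
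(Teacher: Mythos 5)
Your two verification steps (uniform symbol-seminorm bounds and uniform strong ellipticity over $B_{s,q}(0,n)$) are exactly the content of the paper's Lemmas \ref{l:uniform boundness} and \ref{l:uniform strongly}, and your concern about quantitative uniformity of $\lambda_n,\phi_n,M_n$ is legitimate and is handled in the paper by the last sentence of Corollary \ref{cor:pseudo sectioriality strongly elliptic}. However, there is a genuine gap in the reduction itself: you propose to feed the rough symbol $M_F(u;x,\xi)\in S^2_{1,1}\cap C^r_*S^2_{1,0}$ directly into ``an abstract $\mathcal{H}^\infty$-calculus theorem for strongly elliptic pseudodifferential operators with rough symbols of class $C^r_*S^2_{1,0}$.'' No such theorem is available here. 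The tool the paper actually has (Corollary \ref{cor:pseudo sectioriality strongly elliptic}, via Escher--Seiler) applies only to \emph{smooth} symbols in $S^2_{1,\delta}$ with $\delta<1$, perturbed by a term of \emph{strictly lower order} in $S^{2-\varepsilon}_{1,1}$. A symbol that is merely $C^r_*$ in $x$ does not satisfy the $x$-derivative estimates needed for the parametrix construction, and the exotic class $S^2_{1,1}$ ($\rho=\delta=1$) is precisely the one for which the asymptotic symbol calculus and the resolvent estimates break down; your phrase about ``absorbing the $C^r_*S^2_{1,0}$ part'' by enlarging $\lambda_n$ cannot work, because that part is not lower order --- it \emph{is} the full second-order operator.

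The missing idea is symbol smoothing: one must first decompose $M_F(u;x,\xi)=M^{\sharp}_u(x,\xi)+M^b_u(x,\xi)$ with $M^{\sharp}_u\in S^2_{1,\delta}$ ($\delta<1$, smooth) and $M^b_u\in S^{2-r\delta}_{1,1}$ of strictly lower order $2-r\delta<2$. Only then does the abstract corollary apply, with $p=M^{\sharp}_u$ and $r=M^b_u$. This decomposition creates two further obligations that your proposal does not address: (a) the strong ellipticity of (H4) is inherited by $M_F(u;x,\xi)$ but must be transferred to the smoothed symbol $M^{\sharp}_u$, which requires showing $|M^{\sharp}_u(x,\xi)-M_F(u;x,\xi)|\lesssim \langle\xi\rangle^{2-r}$ so that the ellipticity survives for $|\xi|$ large (Step 2 of Lemma \ref{l:uniform strongly}); and (b) both the $S^2_{1,\delta}$-seminorms of $M^{\sharp}_u$ and the $S^{2-r\delta}_{1,1}$-seminorms of $M^b_u$ must be bounded uniformly over $u\in B_{s,q}(0,n)$, which follows from the uniform $C^{2+r}_*$-bound on $u$ together with the continuity of the smoothing procedure in the symbol seminorms. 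Without the decomposition your argument has no smooth elliptic operator to which any available $\mathcal{H}^\infty$ theorem applies, so the proof as written does not close.
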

We will prove Proposition \ref{prop:Hinfinite M} in Subsection \ref{ss:proof of Propositon and P1}.\\
We note that the previous result implies that $\lambda_n+M_{F}(u)$ has the maximal $L_q$-regularity on $H^s_q$ and it allows us to use the abstract theory developed in \cite{Pruss} Chapter 5.\\ 
We point out that, to deal with deterministic fully nonlinear parabolic equations the boundedness of the $\mathcal{H}^{\infty}$-calculus in $H^s_q$ is not necessary; instead it will be of central importance for stochastic parabolic PDEs.\\ 
\\
To give a flavour of the result on fully nonlinear Stochastic PDEs, at this point we consider a fairly simple situation in order to avoid too much technicalities; for a more general situation see Subsection \ref{ss:stochastic proof}.\\
Let $(\Omega,\A,\P)$ be a probability space on which a standard Brownian motion $\{\beta_t\}_{t\in \R^+}$ is defined;  we consider on the probability space the filtration $\f^b:=\{\f_t^b\}_{t\in[0,T]}$ where $\f_t^{b}$ is $\sigma(\beta_s\,:\,s\in [0,t])$ augmented by all $\P$-null sets in $\A$.\\
Let $T>0$ fixed, consider the following problem:
\begin{equation}
\label{eq:stochastic intro}
\begin{cases}
du(t)+ F(x,D^2 u)dt = f(t)dt + B(t,u(t),D^1 u(t))d\beta_t\,,\;\; t\in[0,T],\\
u(0)=u_0\,. 
\end{cases}
\end{equation}
Here:
\begin{itemize}
\item[(H5)] $s>0$, $q\geq 2$ and $q>(2+d)/s$.
\item[(H6)] $u_0:\Omega \rightarrow W^{s+2-2/q}_q$ is strongly $\f_0^b$-measurable.
\item[(H7)] $B:\Omega \times [0,T]\times \R\times \R^d \rightarrow \R$ is measurable and $\omega \mapsto B(\omega,t,y,z)$ is $\f_t^b$ measurable for all $t\in [0,T]$, $y\in \R$ and $z\in \R^d$.\\
Furthermore, $|B(\omega,t,0)|_{L_{\infty}(\Omega; H^s_q)}<\infty$ and
\begin{equation*}
|B(\omega,t,u,D^1 u)-B(\omega,t,v,D^1 v)|_{H^s_q}\leq C|u-v|_{W^{s+2-2/q}_q}\,;
\end{equation*}
for all $u,v\in W^{s+2-2/q}_q$ and $\omega\in \Omega$.
\item[(H8)] $f\in L_q(\Omega\times [0,T];H^s_q)$ is strongly measurable and $\f^b$-adapted.
\end{itemize}
With the same considerations of the deterministic case, the fully nonlinear stochastic problem (\ref{eq:stochastic intro}) can be written as
\begin{equation}
\label{eq:stochastic}
\begin{cases}
du+ M_{F}(u)u \,dt =(G(u)+f)dt+ B(u)dW_{\R}^b\,,\qquad t\in [0,T]\,,\\
 u(0)=u_0\,.
\end{cases}
\end{equation}
See Example \ref{ex:1 dimensional Brownian motion} for $W_{\R}^b$ and Definition \ref{def:solution stochastic} in Subsection \ref{ss:stochastic proof} for the notion of solution to the problem (\ref{eq:stochastic}).\\ 
The following result provides the well posedness of the previous problem, see Theorem \ref{t:wellposedness stochastic refined} for a more general statement. 
\begin{corollary}
\label{t:wellposedness stochastic}
If the assumptions (H3)-(H8) are satisfied. Then, there is a maximal unique local solution  $(u,\{\mu_n\}_n,\mu)$ of the problem (\ref{eq:stochastic}), (see Definition \ref{def:solution stochastic}).
\end{corollary}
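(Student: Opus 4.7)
The plan is to read the SPDE (\ref{eq:stochastic}) as an abstract quasilinear stochastic evolution equation on the UMD Banach space $H^s_q$ and to invoke the abstract existence/maximality theory of \cite{QuasilinearStochastic} (which rests on the stochastic maximal $L^q$-regularity machinery of \cite{StochasticMaximal,StochasticEvolution,Weis}). The setup is parallel to the deterministic proof of Theorem \ref{t:well posedness}, but with the additional requirement that in the stochastic setting one really needs the bounded $\mathcal{H}^{\infty}$-calculus of $\lambda_n+M_F(u)$: since $q\geq 2$ by (H5), $H^s_q$ is a UMD space of type $2$, so that the implication ``bounded $\mathcal{H}^{\infty}$-calculus with angle $<\pi/2$ $\Rightarrow$ stochastic maximal $L^q$-regularity'' is available, and this is precisely what Proposition \ref{prop:Hinfinite M} has been crafted to furnish uniformly on balls.

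\textbf{Ingredients to check.} First, \emph{uniform stochastic maximal regularity}: Proposition \ref{prop:Hinfinite M} supplies $\lambda_n,\phi_n,M_n$ such that $\lambda_n+M_F(u)\in\mathcal{H}^{\infty}(H^s_q)$ with angle $\phi_n<\pi/2$ and a uniform $\mathcal{H}^{\infty}$-bound over $u\in B_{s,q}(0,n)$; combining this with the implication above yields the uniform stochastic maximal $L^q$-regularity of $\lambda_n+M_F(u)$ on $B_{s,q}(0,n)$ required by the abstract framework. Second, \emph{local Lipschitz continuity of the coefficients}: $u\mapsto M_F(u)$ is locally Lipschitz from $W^{s+2-2/q}_q$ into $\mathcal{B}(H^{s+2}_q,H^s_q)$ thanks to the smoothness of $F$ in (H3) and the paradifferential estimates for the symbol $M_F(u;x,\xi)\in S^m_{1,1}\cap C^r_* S^m_{1,0}$ developed in Section \ref{s:paradifferential}; $u\mapsto G(u)=-F(\cdot,\Psi_0(D)D^2u)$ is locally Lipschitz from $W^{s+2-2/q}_q$ into $H^s_q$ because $\Psi_0(D)D^2$ smooths and $F$ is smooth with bounded derivatives; and $B$ is locally Lipschitz by (H7). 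Third, \emph{measurability and adaptedness}: (H6) and (H8) ensure $u_0$ is $\f_0^b$-measurable with values in the trace space $W^{s+2-2/q}_q$ and $f$ is $\f^b$-adapted in $L_q(\Omega\times[0,T];H^s_q)$.

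\textbf{Conclusion.} Once these three items are in place the abstract quasilinear SPDE theorem from \cite{QuasilinearStochastic} (or, equivalently, Chapter~5 of \cite{Pruss} combined with its stochastic counterpart) applies and delivers a maximal unique local solution $(u,\{\mu_n\}_n,\mu)$ in the sense of Definition \ref{def:solution stochastic}, which is the statement of the corollary.

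\textbf{Main obstacle.} The delicate step is the first one: the abstract quasilinear result needs \emph{stochastic} maximal regularity for the frozen operators $\lambda_n+M_F(u)$, with constants that are \emph{uniform} across $u\in B_{s,q}(0,n)$, and this must survive the non-smoothness of the symbol $M_F(u;x,\xi)$ in $x$ (it only lies in $C^r_* S^m_{1,0}$). The proposition is formulated so that the $\mathcal{H}^{\infty}$-constants $\phi_n,M_n$ depend only on $n$, and one then has to invoke the quantitative form of the implication ``bounded $\mathcal{H}^{\infty}$-calculus $\Rightarrow$ stochastic maximal regularity'' to transfer this uniformity to the maximal-regularity constants; it is precisely for this reason that Proposition \ref{prop:Hinfinite M} (rather than only the weaker maximal $L^q$-regularity used in the deterministic case) was established. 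The matching of (H7) with the $\gamma$-radonifying Lipschitz hypothesis on $B$ required by the abstract framework is then routine, thanks to $q\geq 2$.
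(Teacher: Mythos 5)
Your proposal matches the paper's argument: the paper also deduces the corollary from the abstract quasilinear stochastic theory of \cite{QuasilinearStochastic} (via the more general Theorem \ref{t:wellposedness stochastic refined}), verifying its hypotheses exactly through the uniform $\mathcal{H}^{\infty}$-calculus of Proposition \ref{prop:Hinfinite M}, the Lipschitz properties (P1)--(P2), and the measurability/adaptedness assumptions on $u_0$, $B$ and $f$. Your emphasis on the quantitative transfer from the uniform $\mathcal{H}^{\infty}$-bounds to uniform stochastic maximal regularity, using $q\geq 2$ and the UMD structure of $H^s_q$, is precisely the role the paper assigns to that proposition, so the two arguments coincide.
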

See also (\ref{eq:stochasticblow-up criterion}) in Theorem \ref{t:wellposedness stochastic refined} for a blow-up criterion.\\
\\
To complete the proof of Theorems \ref{t:well posedness} and \ref{t:wellposedness stochastic} we will need the following properties of $M_{F}(\cdot)$ and $G(\cdot)$, which complement the content of Proposition \ref{prop:Hinfinite M}.
\begin{itemize}
\item[(P1)] $M_F:W^{s+2-2/q}_q\rightarrow \mathcal{B}(H^{s+2}_q,H^s_q)$ is locally Lipschitz continuous, i.e. for each $n\in \mathbb{N}$ there exists a positive constant $C_n$ such that
\begin{equation*}
|M_F(u)-M_F(v)|_{\mathcal{B}(H^{s+2}_q,H^s_q)} \leq C_n |u-v|_{W^{s+2-2/q}}\,,
\end{equation*}
for any $u,v\in B_{s,q}(0,n)$.
\item[(P2)] $G:W^{s+2-2/q}\rightarrow H^s_q$ is locally Lipschitz continuous, i.e. for each $n\in \mathbb{N}$ there exists a positive constant $C_n$ such that
\begin{equation*}
|G(u)-G(v)|_{H^s_q} \leq C_n |u-v|_{W^{s+2-2/q}}\,,
\end{equation*}
for any $u,v\in B_{s,q}(0,n)$.
\end{itemize}
The paper is organized as follows:\\ 
Section \ref{s:H infinite bounded} is divided into three parts: in Subsection \ref{ss:Hinfinite and maximal} we collect known results on $\mathcal{H}^{\infty}$-calculus and maximal $L_q$-regularity useful in the proof of Theorems \ref{t:well posedness}-\ref{t:wellposedness stochastic}, in Subsection \ref{ss:pseudodifferential} we list some definitions and useful facts about pseudodifferential operators; in the last Subsection \ref{ss:Hinfinite strongly elliptic} we prove that a strongly elliptic operator (see Definition \ref{def:pseudodiff}) admits an $\mathcal{H}^{\infty}$-calculus on $H^s_q$ with $\mathcal{H}^{\infty}$-angle less than $\pi/2$. This result is a simple application of a more general Theorem proved in \cite{Hinfinite} but it will be of central importance in the proof of Proposition \ref{prop:Hinfinite M}.\\
Section \ref{s:paradifferential} contains two parts: in Subsection \ref{ss:Introduction and P2} we briefly explain the construction of paradifferential operator and we prove property (P2), in Subsection \ref{ss:proof of Propositon and P1} we prove the Proposition \ref{prop:Hinfinite M} and property (P1).\\
Section \ref{s:proof of the main results} is divided into two parts, in the first Subsection \ref{ss:deterministic proof} we prove Theorem \ref{t:well posedness} and discuss how to derive further regularity results. Subsection \ref{ss:stochastic proof} is devoted to the Proof of Theorem \ref{t:wellposedness stochastic}. 
The last section is devoted to some comments, extensions and further applications of the approach just explained. 
\section{Preliminaries}
\label{s:H infinite bounded}
\subsection{$\mathcal{H}^{\infty}$-Calculus and Maximal $L_q$-Regularity}
\label{ss:Hinfinite and maximal}
In this subsection we collect some known results and definition which will be useful in the rest of the article. Through this section $X$ denotes a complex Banach space.\\
Let $\phi\in(0,\pi)$ and $\Sigma_{\phi}:=\{z\in \C\setminus \{0\}\,:\,|\arg z|<\phi\}$. We begin with a definition:
\begin{definition}[Sectorial operator] Let $A$ be closed densely defined linear operator on $X$ with dense range. Then $A$ is called a sectorial operator (or briefly $A\in S(X)$) if there exists a $0<\phi<\pi$ such that the resolvent $\rho(A)\supset \C\setminus \Sigma_{\phi}$ and
\begin{equation}
\label{eq:sectorial}
\sup_{z\in  \C\setminus \Sigma_{\phi}} |z (z-A)^{-1}|_{\B(X)}<\infty\,.
\end{equation}
Moreover, the infimum of $\phi\in(0,\pi)$ for which the condition (\ref{eq:sectorial}) holds is called the spectral angle of $A$ and it will be denoted by $\phi_A$.
\end{definition}
Let $\phi\in (0,\pi)$, we denote with $H_0^{\infty}(\Sigma_{\phi})$ the space of the holomorphic function $f$ on the sector $\Sigma_{\phi}$, such that 
\begin{equation*}
\Big\{\sup_{|z|\leq 1\,, z\in \Sigma_{\phi}}|z|^{-\varepsilon} |f(z)| + \sup_{|z|\geq 1\,,z\in \Sigma_{\phi}}|z|^{\varepsilon}|f(z)|\Big\}<\infty\,,
\end{equation*}
for some $\varepsilon>0$.\\
If $A\in S(X)$, let $f\in H^{\infty}_0(\Sigma_\phi)$ for $\phi>\phi_A$ and $\Gamma_{\nu}=\{e^{i\nu}t\,:\,\infty>t\geq 0\} \cup \{e^{-i\nu}t\,:\,0<t<\infty\}$ where $\phi>\nu>\phi_{A}$, then the Dunford integral 
\begin{equation}
\label{eq:f(A)}
f(A):=\frac{1}{2\pi i} \int_{\Gamma_{\nu}} f(\zeta)(\zeta+A)^{-1}\,d\zeta\,,
\end{equation}
is well defined and it converges in $\mathcal{B}(X)$. It can be shown that $f(A)$ does not depend on the value $\nu\in (\phi,\phi_A)$; see \cite{Analysis2,Pruss}.
\begin{definition}[bounded $\mathcal{H}^{\infty}$-calculus]
\label{def:H infinite calculus}
A sectorial operator $A$ on $X$ is said to admit a bounded $\mathcal{H}^{\infty}$-calculus, or briefly $A\in \mathcal{H}^{\infty}(X)$, if there are $\phi>\phi_A$ and a constant $C\in \mathbb{R}^+$ such that
\begin{equation}
\label{eq:H infinite boundness}
|f(A)|_{\mathcal{B}(X)}\leq C|f|_{H^{\infty}(\Sigma_{\phi})}\,, \qquad \forall f\in H_0^{\infty}(\Sigma_{\phi})\,;
\end{equation}
here $|f|_{H^{\infty}(\Sigma_{\phi})}:=\sup_{z\in\Sigma_{\phi}}|f(z)|$ and $f(A)$ is as in (\ref{eq:f(A)}).\\
Furthermore $\phi_{A}^{\infty}$ denotes the $\mathcal{H}^{\infty}$-angle of $A$, i.e. the infimum $\phi>\phi_A$ for which the condition (\ref{eq:H infinite boundness}) holds for some positive constant $C$.
\end{definition}
We conclude this section collecting some facts about maximal regularity. For the notion of mild solution see \cite{Pazy}.
\begin{definition}[Maximal $L_q$-regularity]
\label{def:maximal}
Let $A:D(A)\subset X\rightarrow X$ be a closed densely defined operator on a Banach space $X$. For $q\in(1,\infty)$, we say that $A$ belong to the class $\mathcal{MR}_q(X)$ of maximal $L_q$-regularity if for all $f\in C^1(\mathbb{R}^+,D(A))$ the problem
\begin{equation*}
u'+Au=f\,,\,\,\, t>0\,, \qquad u(0)=0\,,
\end{equation*}
has a unique solution $u$ classical solution, which satisfy
\begin{equation*}
|u'|_{L_q(\mathbb{R}^+,X)}+|Au|_{L_q(\mathbb{R}^+,X)} \leq C |f|_{L_q(\mathbb{R}^+,X)}\,,
\end{equation*}
for some $C>0$ independent on $f$.
\end{definition}
\begin{remark}
\label{r:maximal}
If the Banach space $X$ is an UMD-space, there is a deep characterization of the class $\mathcal{MR}_q(X)$ due to L. Weis; see \cite{Pruss,Weis}.\\
We only mention that, if $X$ is a UMD-space and $A\in \mathcal{H}^{\infty}(X)$, with $\mathcal{H}^{\infty}$-angle less than $\pi/2$, then $A\in \mathcal{MR}_q(X)$ for all $q\in(1,\infty)$ (see \cite{Analysis2,Pruss} for details).
\end{remark}
\subsection{Pseudodifferential Operators}
\label{ss:pseudodifferential}
Here we collect some definitions and known fact about pseudodifferential operators, see \cite{Taylor,Tools} for more on this topic.
\begin{definition} 
\label{def:pseudodiff}
Let $0\leq \delta\leq \rho\leq 1$ and $p(x,\xi)$ be a $C^{\infty}(\mathbb{R}^{2d})$ function.
\begin{itemize} 
\item[(PS)] We say  $p(x,\xi)\in S^m_{\rho,\delta}(\mathbb{R}^{2d})$, or simply $S^m_{\rho,\delta}$, if
\begin{equation}
\label{eq:pseudodiff}
|D_{\xi}^{\beta}\,D_{x}^{\alpha} p(x,\xi)| \leq C_{\alpha,\beta} \langle\xi\rangle^{m- \rho |\beta|+\delta |\alpha|}\,, \qquad \forall \alpha,\beta \in \mathbb{N}_0^d\,. 
\end{equation}
If $p$ does not depend on $x$, we say that $p(\xi)\in S^m_\rho$ if the condition in (\ref{eq:pseudodiff}) is satisfied.
\item[(E)] (Ellipticity). 
We say that $p(x,\xi)\in S^m_{\rho,\delta}$ is elliptic if
\begin{equation*}
|p(x,\xi)|\geq c  \langle\xi\rangle^m\,\qquad \text{if}\qquad|\xi|>L\,,x\in \mathbb{R}^d\,, 
\end{equation*}
for some positive constant $c$ and $L\in \mathbb{R}$ large enough.
\item[(SE)] (Strong Ellipticity). 
We say that $p(x,\xi)\in S^m_{\rho,\delta}$ is  strongly elliptic if
\begin{equation}
\label{eq:strong ellipticity p}
\Re\, p(x,\xi)\geq c  \langle\xi\rangle^m\,\qquad \text{if}\qquad |\xi|>L\,, x\in \mathbb{R}^d\,,
\end{equation}
for some positive constant $c$ and $L\in \mathbb{R}$ large enough.
\end{itemize}
Here $\langle\xi\rangle=(1+|\xi|^2)^{1/2}$.
\end{definition}
Note that, we can define a Fr\'echet topology on $S^{m}_{1,\delta}$, which is generated by the countable family of seminorm $\mathtt{S}_{j}^k$ defined as
\begin{equation}
\mathtt{S}_j^k(p):=\sup\{ \langle\xi\rangle^{-m+|\beta|-\delta|\alpha|} |D_{\xi}^{\beta}D_x^{\alpha}p(x,\xi)|\,:\, x,\xi \in \mathbb{R}^d\,,|\alpha|\leq j\,,\,\,|\beta|\leq k\}\,,
\end{equation}
for $k,j\in \mathbb{N}$. Moreover, we say that a set of symbols $\{p_i\,:\,i\in I\}$ is bounded in $S^{m}_ {1,\delta}$ if $\{\mathtt{S}_{j}^k(p_i)\,:\,i\in I\}$ is bounded in $\mathbb{R}^+$ for all $j,k\in \mathbb{N}$.\\
In the subsequent analysis, we will need of the following rough symbol class.
\begin{definition}
\label{def:pseudodiff rough}
Let $p(x,\xi)$ be a $C^{\infty}(\mathbb{R}^d;C^r_*(\mathbb{R}^d))$ map. We say $p(x,\xi)\in C^r_*S^m_{1,\delta}$ for $r>0$,  $m\in \R$, $0\leq  \delta\leq\rho\leq 1$ if
\begin{align*}
|D^{\alpha}_{\xi}p(x,\xi)|&\leq C_{\beta} \langle\xi\rangle^{m-|\alpha|}\,,\\
|D^{\alpha}_{\xi}p(\cdot,\xi)|_{C^r_*} &\leq C_{\beta} \langle\xi\rangle^{m-|\alpha|+r\delta}\,,
\end{align*}
for all $x,\xi\in \mathbb{R}^d$.
\end{definition}
Here and in the sequel, for any $f\in \mathcal{S}(\mathbb{R}^d)$, we define
\begin{equation*}
(p(x,D)f)(x)=\frac{1}{(2\pi)^d}\int_{\mathbb{R}^d} e^{i x \cdot \xi} p(x,\xi) \mathcal{F}(f)(\xi)\,d\xi\,,
\end{equation*}
where $\mathcal{F}(f)$ denotes the Fourier transform of $f$.\\
If $p(x,\xi)\in S^{m}_{1,\delta}$ with $\delta \in [0,1)$, one can prove that
\begin{equation*}
p(x,D):H^{m+s}_q \rightarrow H^{s}_q\,,
\end{equation*}
for all $s\in \mathbb{R}\,, q\in(1,\infty)$ (see \cite{Taylor}). The following fact is essential to prove the closedness of the realization of a pseudodifferential operators (see Definition \ref{def:realization}).
\begin{proposition}[A priori estimate]
\label{a priori estimate}
Let $p(x,\xi)\in S^m_{\rho,\delta}$ be an elliptic pseudodifferential operator, with $1\geq \rho>\delta\geq 0$. Then there exists a constant $c>0$ such that for all $q\in(1,\infty)$, $s\in \mathbb{R}$ and $u\in H^{s+m}_p$ we have
\begin{equation}
\label{eq:a priori estimate}
|u|_{H^{m+s}_p} \leq c(|u|_{H^s_p}+|p(x,D)u|_{H^{s}_p})\,.
\end{equation}
\end{proposition}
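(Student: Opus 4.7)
The plan is to construct a parametrix for $p(x,D)$ and combine it with the standard $L_q$-boundedness of pseudodifferential operators recalled in Subsection~\ref{ss:pseudodifferential}: any symbol in $S^{k}_{\rho,\delta}$ with $\delta<1$ induces a bounded map $H^{s+k}_q \to H^s_q$ for all $s\in\R$ and $q\in(1,\infty)$.

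The core step is to produce a left parametrix, i.e.\ a symbol $q(x,\xi)\in S^{-m}_{\rho,\delta}$ and a smoothing operator $R$ (whose symbol lies in $\bigcap_{k\in\R} S^{k}_{\rho,\delta}$) such that
\[
q(x,D)\,p(x,D) = I + R.
\]
The construction is classical. First pick $\chi\in C^{\infty}(\R^d)$ vanishing on $\{|\xi|\leq L\}$ and equal to $1$ on $\{|\xi|\geq 2L\}$, and set $q_0(x,\xi):=\chi(\xi)/p(x,\xi)$. Assumption (E) ensures $q_0$ is well defined on $\mathrm{supp}\,\chi$, and a Leibniz computation gives $q_0\in S^{-m}_{\rho,\delta}$. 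The symbol composition formula, valid precisely because $\rho>\delta$, then yields
\[
q_0(x,D)\,p(x,D) = I + r_1(x,D),
\]
with $r_1\in S^{-(\rho-\delta)}_{\rho,\delta}$ modulo a smoothing remainder, so $r_1$ strictly gains the positive order $\rho-\delta$. Iterating (asymptotic summation of a Neumann-type series in the symbolic calculus) improves the remainder order by $\rho-\delta$ at every step and produces $q\in S^{-m}_{\rho,\delta}$ together with $R$ of order $-\infty$.

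With the parametrix in hand, I would write $u = q(x,D)\,p(x,D)u - Ru$ and apply the triangle inequality together with the mapping properties mentioned above:
\[
|u|_{H^{s+m}_q} \leq |q(x,D)\,p(x,D)u|_{H^{s+m}_q} + |Ru|_{H^{s+m}_q}.
\]
The first term is bounded by $c_1|p(x,D)u|_{H^s_q}$ because $q\in S^{-m}_{\rho,\delta}$ maps $H^s_q \to H^{s+m}_q$; the second is bounded by $c_2|u|_{H^s_q}$ because $R$ is smoothing, hence in particular continuous from $H^s_q$ into $H^{s+m}_q$. This yields (\ref{eq:a priori estimate}) with $c:=\max(c_1,c_2)$.

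The main obstacle is the parametrix construction itself: one must have at one's disposal a composition formula with a remainder of strictly lower order than the leading term, together with an asymptotic summation procedure in the symbol classes. Both are provided by the symbolic calculus in $S^m_{\rho,\delta}$ when $\rho>\delta$ (the hypothesis $\rho>\delta$ is essential here—asymptotic summation breaks down in the limiting case $\rho=\delta$), and both are developed in \cite{Taylor,Tools}. Once these tools are invoked, (\ref{eq:a priori estimate}) follows in a few lines as above; note that the constant $c$ depends only on the seminorms of $p$ and on the ellipticity constant, not on $u$ or on $s$ (beyond the norms that appear).
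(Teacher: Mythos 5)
The paper itself gives no proof of Proposition \ref{a priori estimate}: it is stated as a standard fact from the pseudodifferential calculus (implicitly referred to \cite{Taylor}), so there is nothing in the text to compare against. Your parametrix construction is exactly the classical argument that underlies it, and it is correct: ellipticity gives $q_0=\chi/p\in S^{-m}_{\rho,\delta}$, the composition calculus (valid since $\rho>\delta$) gives $q_0(x,D)p(x,D)=I+r_1(x,D)$ with $r_1$ of strictly negative order, asymptotic summation upgrades this to a smoothing remainder, and the estimate follows from the mapping properties of $q(x,D)$ and $R$. One caveat worth recording: the $H^{s+k}_q\to H^s_q$ boundedness you invoke is stated in Subsection \ref{ss:pseudodifferential} only for symbols in $S^{k}_{1,\delta}$, and indeed for $\rho<1$ and $q\neq 2$ operators in $S^{0}_{\rho,\delta}$ need not be bounded on $L_q$ (Fefferman's loss of derivatives), so your argument — like the proposition's statement, which allows general $\rho>\delta$ — is really only secured for $\rho=1$ when $q\neq2$; this is the only case the paper actually uses. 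Also note that the constants in these operator-norm bounds depend on $s$ and $q$, so the resulting $c$ in \eqref{eq:a priori estimate} does as well, contrary to the literal uniformity asserted in the statement; again this is a defect of the statement rather than of your proof.
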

\begin{definition}[Realization of $p(x,D)$ on $H^s_q$] 
\label{def:realization}
Let $s>0$, $q\in (1,\infty)$ and $p(x,\xi)\in S^m_{1,\delta}$ then we will denote by $p^{R}(D)$ the closed linear operator
\begin{align*}
D(p^R(D)):=H^{s+m}_{q}\subset H^s_q &\rightarrow H^s_q\,,\\
f &\mapsto p(x,D)f\,.
\end{align*}
For notational convenience, in Definition \ref{def:realization} we have dropped the dependence on $s>0$ and $q\in(1,\infty)$.
\end{definition}
\subsection{$\mathcal{H}^{\infty}$-Calculus for Strongly Elliptic Pseudodifferential Operators}
\label{ss:Hinfinite strongly elliptic}
The goal of this section is to prove to following result, which we state as a corollary since it is an easy application of Theorem 4.1 in \cite{Hinfinite}.
\begin{corollary}
\label{cor:pseudo sectioriality strongly elliptic}
Let $p(x,\xi) \in S^2_{1,\delta}$,  with $\delta \in [0,1)$, be a strongly elliptic pseudodifferential operator and $r(x,D)\in S^{2-\varepsilon}_{1,1}$ with $\varepsilon>0$. Then for any $s\geq 0$ and $q\in(1,\infty)$, there exists $\lambda_0\in \mathbb{R}$ such that the realization of $\lambda_0 + r(x,D) + p(x,D)$ on $H^s_p$ admits a bounded $\mathcal{H}^{\infty}$-calculus on $H^s_q$, with $\mathcal{H}^{\infty}$-angle less than $\pi/2$.\\ 
Furthermore, if $c,L$ are as in (SE) of Definition \ref{def:pseudodiff}, we can choose the value $\lambda_0$ depending only on $L,c$ and $\mathtt{S}_j^k(p)$ for $j,k$ sufficiently large.
\end{corollary}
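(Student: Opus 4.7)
The plan is to deduce the corollary from Theorem~4.1 of \cite{Hinfinite} in two steps: first obtain the bounded $\mathcal{H}^\infty$-calculus for a sufficiently large translate of $p(x,D)$ alone, and then absorb the rough lower-order term $r(x,D)$ by a perturbation argument.

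For the first step, the strong ellipticity estimate $\Re p(x,\xi)\geq c\langle\xi\rangle^2$ for $|\xi|>L$, together with the uniform bound on $p(x,\xi)$ on the compact set $\{|\xi|\leq L\}$ coming from finitely many seminorms $\mathtt{S}_j^k(p)$, implies that for $\lambda_1$ large enough the shifted symbol $\lambda_1+p(x,\xi)$ takes values in a sector $\Sigma_{\psi}$ with $\psi<\pi/2$, uniformly in $(x,\xi)\in\mathbb{R}^{2d}$. This places $\lambda_1+p$ within the scope of Theorem~4.1 of \cite{Hinfinite}, which directly yields that the realization of $\lambda_1+p(x,D)$ on $H^s_q$ admits a bounded $\mathcal{H}^\infty$-calculus with angle strictly less than $\pi/2$.

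The second step exploits the fact that $r(x,D)\in S^{2-\varepsilon}_{1,1}$ is of strictly lower order: by the mapping properties of this class one has $r(x,D)\in \mathcal{B}(H^{s+2-\varepsilon}_q,H^s_q)$, and combining this with the a priori estimate of Proposition~\ref{a priori estimate} for $p(x,D)$ and interpolation between $H^{s+2}_q$ and $H^s_q$, one produces for every $\eta>0$ a constant $C_\eta$ such that
\begin{equation*}
|r(x,D)u|_{H^s_q}\leq \eta\,|(\lambda_1+p(x,D))u|_{H^s_q}+C_\eta\,|u|_{H^s_q}, \qquad u\in H^{s+2}_q.
\end{equation*}
Thus $r(x,D)$ is $(\lambda_1+p(x,D))$-bounded with arbitrarily small relative bound, and a standard perturbation theorem for the $\mathcal{H}^\infty$-calculus (see \cite{Pruss}) produces some $\lambda_0\geq \lambda_1$ for which $\lambda_0+r(x,D)+p(x,D)$ also admits a bounded $\mathcal{H}^\infty$-calculus on $H^s_q$ with angle less than $\pi/2$.

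The main obstacle is quantitative rather than qualitative: the last sentence of the statement requires $\lambda_0$ to depend only on $c,L$ and finitely many seminorms $\mathtt{S}_j^k(p)$. This forces one to inspect the proof of Theorem~4.1 of \cite{Hinfinite} and verify that only finitely many seminorms of $p$ enter the threshold it produces, and to check that the perturbation step introduces only constants controlled by the same quantities. Observe that the class $S^{2-\varepsilon}_{1,1}$ is not directly covered by Proposition~\ref{a priori estimate} (since here $\rho=\delta=1$), but its role as a strictly lower-order perturbation sidesteps this barrier entirely.
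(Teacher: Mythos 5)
Your proposal reaches the right conclusion but routes the lower-order term differently from the paper. The paper does not perform a separate perturbation step: Theorem~4.1 of \cite{Hinfinite} is already stated for a $\Lambda_{\phi}$-elliptic symbol \emph{plus} a perturbation that is merely bounded from $H^{s+2-\nu}_q$ to $H^s_q$, so the paper only has to check (i) that $r(x,D):H^{s+2-\nu}_q\to H^s_q$ (via the $S^{2-\varepsilon}_{1,1}$ mapping theorem, with the small twist that for $s=0$ one must take $\nu=\varepsilon/2$ and use $H^{\varepsilon/2}_q\hookrightarrow L_q$, since exotic $S^m_{1,1}$ operators are not bounded on $H^s_q$ for $s\le 0$), and (ii) the $\Lambda_{\phi}$-ellipticity of $p$, which is \emph{not} just the statement that the symbol takes values in a subsector of $\Sigma_{\pi/2}$: one must also verify the parameter-dependent estimates $|D^{\alpha}_{\xi}D^{\beta}_x p(x,\xi)|\,|\lambda-p(x,\xi)|^{-1}\le C\langle\xi\rangle^{-|\alpha|+\delta|\beta|}$ uniformly in $\lambda\in\Lambda_{\phi}$, which the paper derives from the sector condition by the lower bound $|\lambda-p(x,\xi)|\ge c'(|\lambda|+|p(x,\xi)|)$. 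Your Step~1 glosses over this verification. Your Step~2 is a legitimate alternative, but as written it rests on a claim that is false in general: a relatively bounded perturbation with arbitrarily small relative bound preserves sectoriality but \emph{not} the boundedness of the $\mathcal{H}^{\infty}$-calculus (this is the classical instability phenomenon). What saves your argument is the stronger structural fact that $r(x,D)$ maps $H^{s+2-\varepsilon}_q\to H^s_q$ and, since $A:=\lambda_1+p^R(D)$ has bounded imaginary powers, $H^{s+2-\varepsilon}_q=D(A^{1-\varepsilon/2})$; one must therefore invoke the \emph{lower-order} perturbation theorem for the $\mathcal{H}^{\infty}$-calculus in \cite{Pruss} (perturbations bounded from $D(A^{\alpha})$ with $\alpha<1$), not a small-relative-bound statement. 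With that correction, and with the quantitative tracking of seminorms you already flag, your argument goes through; the paper's route is shorter because the perturbation is absorbed into the hypotheses of the cited theorem rather than handled by abstract operator theory.
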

\begin{proof}
Combining Theorem 4.1 and Example 4.2 in \cite{Hinfinite} (recall $F^s_{q,2}=H^s_q$), we have to prove that:
\begin{itemize}
\item $r(x,D):H^{2-\nu+s}_q \rightarrow H^s_q$, for $\nu>0$ small enough.
\item $p(x,\xi)\in S^2_{1,\delta}$ is $\Lambda_{\phi}$-elliptic, with 
\begin{equation*}
\Lambda_{\phi}:=\{re^{i\theta}\,:\,r\geq 0\,,\, \phi<\theta<2\pi-\phi\}\,,
\end{equation*}
for some $\phi<\pi/2$; for the definition of $\Lambda_{\phi}$-ellipticity we refer to \cite{Hinfinite} Definition 3.3.
\end{itemize}
The first item is quite easy, since by Theorem 9.1 in Chapter 13 in \cite{Taylor} it follows that
\begin{equation*}
r(x,D):H^{2-\varepsilon+s}_q \rightarrow H^s_q\,, 
\end{equation*}
provided $s>0$ and $q\in(1,\infty)$.  Now to prove the first item choose $\nu=\varepsilon$ if $s>0$, for $s=0$ one can choose $\nu=\varepsilon/2$, since
\begin{equation*}
r(x,D):H^{2-\varepsilon+\varepsilon/2}_q \rightarrow H^{\varepsilon/2}_q\hookrightarrow L_q\,. 
\end{equation*}
The second item is a verification of the hypotheses (H1)-(H2) of Definition 3.3 in \cite{Hinfinite}.\\
Before proceeding further, we note that for each $x\in \mathbb{R}^d$ and $|\xi|>L$ we have $p(x,\xi)\in \Sigma_{\phi'}$ for a $\phi'\in (\arctan(C_{0,0}/c);\pi/2)$; here $\Sigma_{\phi'}$ is the sector symmetric with respect to the positive real axis of angle $2\phi'$.\\
Indeed, since $p(x,\xi)$ is strongly elliptic (see (\ref{eq:strong ellipticity p})), then for each $x\in \mathbb{R}^d$ and $|\xi|>L$ we have
\begin{equation}
\label{eq:range}
|\Im p(x,\xi)| \leq C_{0,0}\langle\xi\rangle^{2} \leq \frac{C_{0,0}}{c} \Re p(x,\xi)\,,
\end{equation}
and the claim follows; here $C_{0,0}$ is as in (\ref{eq:pseudodiff}) for $\alpha=\beta=0$.\\
In order to verify the hypothesis (H1), we have to find a $R>0$ such that for all $x\in \mathbb{R}^d$ and $|\xi|>R$, we have
\begin{equation*}
\sigma(p(x,\xi))\subset \{z\in \mathbb{C}\,:\, C_0^{-1}\langle\xi\rangle^{2}<|z|< C_0\langle\xi\rangle^{2}\,,\,\,z\notin \Lambda_{\phi}\}\,,
\end{equation*}
for some $C_0>0$ and $\phi\in(0;\pi/2)$; here $\sigma(\cdot)$ denotes the spectrum.\\
Since $p(x,\xi)$ is scalar and (\ref{eq:range}) holds, we have only to verify that $C_0^{-1}\langle\xi\rangle^{2}<|p(x,\xi)|< C_0\langle\xi\rangle^{2}$ for some $C_0$. By the strongly ellipticity assumption and the fact $p(x,\xi)\in S^{2}_{1,\delta}$, this holds for $R=L$ and $C_0=\max\{C_{0,0},c^{-1}\}$.\\
We now move to hypothesis (H2). Note that, with simple geometrical considerations, if $\lambda\in \Lambda_{\phi}$ where $\pi/2>\phi>\phi'$ and $\pi>\phi+ \phi'$, we obtain 
\begin{equation*}
|-\lambda + p(x,\xi)|\geq c' (|\lambda|+|p(x,\xi)|)\geq c' (\Re p(x,\xi))\geq c' c \langle\xi\rangle^{2}\,,
\end{equation*}
 where $c'=\cos((\pi-\phi-\phi')/2)$ and $|\xi|>L$. Now, for any $\alpha,\beta\in \mathbb{N}^d_0$, it is easy to see that
 \begin{align*}
 |D^{\alpha}_{\xi} D^{\beta}_{x} p(x,\xi)| |\lambda-p(x,\xi) |^{-1} &\leq (C_{\alpha,\beta} \langle\xi\rangle^{2-|\alpha|+\delta |\beta|})(c c' \langle\xi\rangle^{2})^{-1}\\
 &= \tilde{C} \langle\xi\rangle^{-|\alpha|+\delta |\beta|}\,,
 \end{align*}
 as desired.
\end{proof}
In the future analysis we do not need the $s=0$ case of Corollary \ref{cor:pseudo sectioriality strongly elliptic}.\\
Lastly, we mention that the same application of Theorem 4.1 in \cite{Hinfinite} shows that the same result holds with $H^s_q$ replaced by $F^s_{q,q'}$ or $B^{s}_{q,q'}$ where $q,q'\in (1,\infty)$ and $s\geq 0$. But we do not need this in the following.\section{Paradifferential Operator}
\label{s:paradifferential}
\subsection{Introduction and Proof of Property (P2)}
\label{ss:Introduction and P2}
Before starting with the proofs, we recall the following fairly simple construction.\\
Take any $\Psi_0\in C^\infty_0(\mathbb{R}^d)$ such that $\Psi(\xi)=1$ on $|\xi|\leq 1/2$ and $\Psi(\xi)=0$ on $|\xi|\geq 1$, then define
\begin{align}
\label{eq:Psi}
\Psi_k(\xi)&:=\Psi_0(2^{-k}\xi)\,,&\qquad&k\geq 1\,,\\
\label{eq:psi}
 \qquad \psi_k(\xi)&:=\Psi_k(\xi)-\Psi_{k-1}(\xi)\,, &\qquad&k\geq 1\,,\\
 \psi_0(\xi)&:=\Psi_0(\xi)\,. &\qquad&
\end{align}
It is easy to see that
\begin{equation}
\label{eq:partition}
\sum_{k=0}^{\infty} \psi_k(\xi)=1\,.
\end{equation}
In light of (\ref{eq:partition}) this construction is called (smooth) Littlewood-Paley partition of the unity.\\
\\
We start with a simple observation. If $|\alpha|\leq 2$ and $u\in C^2:=C^2(\R^d)$, then
\begin{equation}
\label{eq:pointwise convergence}
\lim_{k\nearrow \infty} D^{\alpha}\Psi_k(D)u(x) = D^{\alpha}u(x)\,, 
\end{equation}
for all $x\in \R^d$. Indeed, $D^{\alpha}\Psi_k(D)u=\F^{-1}(\Psi_k)\star D^{\alpha}u$, then $\F^{-1}(\Psi_k)$ is an approximation of the identity, since $D^{\alpha}u\in C(\R^d)$ every point in $\R^d$ is a Lebesgue point so the claim follows by the standard result, see for instance \cite{Analysis1}.\\ 
For reader's convenience, we sketch the construction of paradifferential operator  in order to highltight the quasilinear structure arising from this construction (see Chapter 13 Section 10 of \cite{Taylor} for a more complete treatment).\\ 
Consider $F$ as in (H3), (for notational convenience, we do not report explicity the dependence of $F$ on $x$, so we write $F(\zeta)$ instead of $F(x,\zeta)$) and let $u\in C^{2}_*$. Then by (\ref{eq:pointwise convergence}),
\begin{multline}
\label{eq:paradiff 1}
F(D^2 u)= F(D^2\Psi_0(D)u) + [F(D^2\Psi_1(D)u) -F(D^2\Psi_0(D)u)] + \dots\\
[F(D^2\Psi_{k+1}(D)u) -F(D^2\Psi_k(D)u)]+\dots\,,
\end{multline}
pointwise. Now, it easy to see that
\begin{multline*}
[F(D^2\Psi_{k+1}(D)u) -F(D^2\Psi_k(D)u)] =\\
 \sum_{|\alpha|\leq 2} \int_0^1 dt\, \frac{\partial F}{\partial \zeta_{\alpha}}(\Psi_k(D)D^2u + t\psi_{k+1}(D)D^2 u)\, (\psi_{k+1 }(D) D^\alpha u)\,.
\end{multline*}
Define
\begin{align*}
m_k^{u,\alpha}(x)&= \int_0^1 dt\, \frac{\partial F}{\partial \zeta_{\alpha}}(\Psi_k(D)D^2u + t\psi_{k+1}(D)D^2 u)\,,\\
M_F^{\alpha}(u;x,\xi)&=\sum_{k=0}^{\infty} m_k^{u,\alpha}(x)\psi_{k+1}(\xi)\xi^{\alpha}\,\\
M_F(u;x,\xi)&= \sum_{|\alpha|\leq 2} M_F^{\alpha}(u;x,\xi)\,. 
\end{align*}
Due to (\ref{eq:paradiff 1}) then
\begin{equation}
\label{eq:pointwise equality}
F(D^2u)=F(D^2\Psi_0(D)u) + M_F(u;x,D)u\,,
\end{equation}
pointwise. Before proving property (P2) we note that, by the assumption (H1) we have $q>(2+d)/s$, then
\begin{equation}
\label{eq:regularity for trace space}
(H^s_q,H^{s+2}_q)_{1-\frac{1}{q},q}= W^{s+2-\frac{2}{q}}_{q}\hookrightarrow C_{*}^{2+r}	\,,
\end{equation}
where $r:=s-(2+d)/q>0$; the last injection follows by Sobolev embedding Theorem.\\
Now we are in the position to prove the condition (P3); we recall that $G(u):=-F(D^2\Psi_0(D)u)$.
\begin{proof}[Proof of property (P2)] To prove that $F$ maps $W^{s+2-\frac{2}{q}}_{q}$ into $H^s_q$, since by hypothesis (H3) $G(0)=-F(x,0)\in H^s_q$, it is enough to prove the Lipschitz continuity.\\
In order to estimate $|G(u)-G(v)|_{H^s_q}$ one can use the Proposition 7.1 of Chapter 2 in \cite{Tools}, i.e.
\begin{align*}
|F(D^2\Psi_0(D)&u)- F(D^2\Psi_0(D)v)|_{H^s_q}\\
 &\leq K(|D^2\Psi_0(D)u|_{L_{\infty}},|D^2\Psi_0(D)v|_{L_{\infty}})\\
 &\cdot(1+|D^2\Psi_0(D)u|_{H^s_q}+|D^2\Psi_0(D)v|_{H^s_q}) |D^2\Psi_0(u-v)|_{L_{\infty}}\\
 &+C|G(D^2\Psi_0(D)u,D^2\Psi_0(D)v)|_{L_{\infty}} |D^2\Psi_0(D)(u-v)|_{H^s_q}\,;
\end{align*}
where $K:\mathbb{R}^+\times \mathbb{R}^+\rightarrow \mathbb{R}^+$ is locally bounded and
\begin{equation*}
G(u,v)= \sum_{|\alpha|\leq 2} \int_{0}^1 dt \frac{\partial F}{\partial \zeta_{\alpha}} \Big(D^2\Psi_0(D)((1-t)u+tv))\Big)\,.
\end{equation*}
Fix any $n\in \mathbb{N}$. Since $D^{\alpha}\Psi_0(D)\in S^{-N}$ for any $N\in \N$ and $|\alpha|\leq 2$, then for suitable $\tilde{C}_n,C_n>0$,
\begin{align*}
|D^2\Psi_0(D)u|_{L_{\infty}}&\leq C_n |u|_{L_{\infty}} <\tilde{C}_n\,,\\
|D^2\Psi_0(D)v|_{L_{\infty}}&\leq C_n |v|_{L_{\infty}} <\tilde{C}_n\,,\\
|D^2\Psi_0(D)u|_{H^s_q} &\leq C_n |u|_{W^{s+2-\frac{2}{q}}_{q}}<\tilde{C}_n\,,\\
|D^2\Psi_0(D)v|_{H^s_q} &\leq C_n |v|_{W^{s+2-\frac{2}{q}}_{q}}<\tilde{C}_n\,,\\
|D^2\Psi_0(u-v)|_{L_{\infty}}&\leq C_n |u-v|_{L_{\infty}}  \leq C_n |u-v|_{W^{s+2-\frac{2}{q}}_{q}}\,,\\
|D^2\Psi_0(D)(u-v)|_{H^s_q}&\leq C_n|u-v|_{W^{s+2-\frac{2}{q}}_{q}}\,,
\end{align*}
for any $u,v\in B_{s,q}(0,n)$.
\end{proof}
\subsection{Proof of Proposition \ref{prop:Hinfinite M} and Property (P1)}
\label{ss:proof of Propositon and P1}
In this subsection we analyse the pseudodifferential operator $M_F(u;x,\xi)$; recall that $u\in W^{s+2-2/q}_{q}\hookrightarrow C^{2+r}_*$ by (\ref{eq:regularity for trace space}).\\
Now we recall the following, proven in \cite{Taylor}, Chapter 13 Section 10:
\begin{equation*}
M_F(u;x,\xi)\in S^{2}_{1,1}\cap C^r_* S^2_{1,0}\,.
\end{equation*}
At this point, we take advantage of the smoothing symbol technique proposed in \cite{Taylor,Tools}. For any $\delta\in (0,1)$ this technique allows us to write 
\begin{equation}
\label{eq:MF decomposition}
M_F(u;x,\xi)=M^{\sharp}_{u}(x,\xi)+M^b_{u}(x,\xi)\,,
\end{equation}
where
\begin{equation}
\label{eq:M decomposition regularity}
M^{\sharp}_{u}\in S^2_{1,\delta}\,, \qquad M^b_{u}\in S^{2-r\delta}_{1,1}\cap C^r_*S^{2-r\delta}_{1,\delta}\,.
\end{equation}  
Furthermore $M^{\sharp}_{u}$ is explicitly given by
\begin{equation}
\label{def:M sharp}
M^{\sharp}_{u}(x,\xi)=\sum_{k=0}^{\infty} J_{\varepsilon_k}(M_F(u;x,\xi))\,\psi_{k}(\xi)\,,
\end{equation}
where $\varepsilon_k=2^{-k\delta}$, $\delta\in (0,1)$, $J_{\varepsilon}=\tau(\varepsilon D)$ acts on the variable $x$ and $\tau \in C^{\infty}_0(\mathbb{R}^d)$ such that $\tau(\xi)=1$ for $|\xi|<1$; see \cite{Taylor,Tools}.\\
Exploiting the construction (\ref{eq:MF decomposition})-(\ref{def:M sharp}) we will prove two lemmas, which permits to demonstrate Proposition \ref{prop:Hinfinite M}.
\begin{lemma}
\label{l:uniform boundness}
For any $n\in \mathbb{N}$ and $\delta\in (0,1)$, the sets $\{M_{u}^{\sharp}\,:\,u\in B_{s,q}(0,n)\}$ and $\{M_{u}^b\,:\,u\in B_{s,q}(0,n)\}$ are bounded respectively in $S^{2}_{1,\delta}$ and in $S^{2-r\delta}_{1,1}$.
\end{lemma}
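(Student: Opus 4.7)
The plan is to trace the $u$-dependence of every constant through the construction of $M_F(u;x,\xi)$ and the smoothing procedure, showing that each bound depends on $u$ only through its $C^{2+r}_*$-norm, which is itself bounded uniformly on $B_{s,q}(0,n)$.

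First, by the embedding $W^{s+2-2/q}_q \hookrightarrow C^{2+r}_*$ recorded in (\ref{eq:regularity for trace space}), there exists $N_n>0$ such that $|u|_{C^{2+r}_*}\leq N_n$ for every $u\in B_{s,q}(0,n)$. Since the Fourier multipliers $\Psi_k(D)$ and $\psi_{k+1}(D)$ are given by convolution with $L^1$-functions whose $L^1$-norms are bounded uniformly in $k$, and since they commute with $D^\alpha$, the functions
\[
v_{k,t}:=\Psi_k(D)D^2 u + t\,\psi_{k+1}(D)D^2 u\,,\qquad k\in\mathbb{N}_0,\;t\in[0,1],
\]
form a bounded set in $C^r_*$ (and in $L_\infty$) uniformly in $u\in B_{s,q}(0,n)$, $k$ and $t$. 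Assumption (H3) then gives, via a standard Moser-type composition estimate (see e.g.\ Proposition 7.1 in Chapter 2 of \cite{Tools}, used in the proof of (P2) above), a uniform bound
\[
\sup_{u\in B_{s,q}(0,n)}\sup_{k,t}\Big|\tfrac{\partial F}{\partial \zeta_\alpha}(\cdot,v_{k,t}(\cdot))\Big|_{C^r_*}\leq K_n,
\]
so that the coefficients $m_k^{u,\alpha}$ are bounded in $C^r_*$ uniformly in $k$ and in $u\in B_{s,q}(0,n)$.

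Next I invoke the standard symbol-construction result from Chapter 13 of \cite{Taylor}: whenever $\{m_k\}_{k\in\mathbb{N}_0}$ is bounded in $C^r_*$, the symbol $\sum_k m_k(x)\psi_{k+1}(\xi)\xi^\alpha$ lies in $S^{2}_{1,1}\cap C^r_* S^{2}_{1,0}$ with every symbol seminorm controlled by $\sup_k |m_k|_{C^r_*}$. Applying this to each of the finitely many multi-indices $|\alpha|\leq 2$ and summing, the set $\{M_F(u;\cdot,\cdot):u\in B_{s,q}(0,n)\}$ is bounded in $S^{2}_{1,1}\cap C^r_* S^{2}_{1,0}$.

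Finally, I apply the symbol-smoothing construction (\ref{def:M sharp}) with $\varepsilon_k=2^{-k\delta}$. The standard estimates for $J_{\varepsilon}=\tau(\varepsilon D)$ (see \cite{Taylor,Tools}) translate a bound of a symbol in $C^r_* S^{2}_{1,0}$ into a bound of $M^\sharp_u$ in $S^{2}_{1,\delta}$ and of $M^b_u=M_F(u;\cdot,\cdot)-M^\sharp_u$ in $S^{2-r\delta}_{1,1}\cap C^r_* S^{2-r\delta}_{1,\delta}$, with the $S$-seminorms on the output controlled linearly by the $C^r_* S^{2}_{1,0}$-seminorms of the input. Since those input seminorms were shown to be uniformly bounded over $u\in B_{s,q}(0,n)$ in the previous step, the claimed uniform $S^{2}_{1,\delta}$-bound on $\{M^\sharp_u\}$ and the uniform $S^{2-r\delta}_{1,1}$-bound on $\{M^b_u\}$ follow. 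The main obstacle is not a conceptual one but rather the bookkeeping just described: one must be careful that each black-box estimate borrowed from \cite{Taylor,Tools} is of the quantitative form "seminorm of output $\leq$ constant $\times$ seminorm of input," so that a uniform bound on the input propagates to a uniform bound on the output.
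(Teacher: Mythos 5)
Your proposal is correct and follows essentially the same route as the paper: a uniform $C^{2+r}_*$ bound on $B_{s,q}(0,n)$ via the Sobolev embedding, uniform boundedness of $\{M_F(u;x,\xi)\}$ in $S^2_{1,1}\cap C^r_*S^2_{1,0}$ from the paradifferential construction, and then the quantitative symbol-smoothing estimates (Propositions 10.4--10.5 in Chapter 13 of \cite{Taylor}) to transfer the uniform bound to $M^{\sharp}_u$ and $M^b_u$. You simply make explicit the bookkeeping that the paper delegates to the cited references.
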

\begin{proof}
Note that if $u\in B_{s,q}(0,n)\subset C^{2+r}_{*}$ (here, as before, $r:=s-(2+d)/q>0$ by (H2)) then by Sobolev embedding Theorem
\begin{equation}
\label{eq:uniform boundness in W}
|u|_{C^{2+r}}\leq C |u|_{W^{s+2-2/q}_q}\leq C_n\,,
\end{equation} 
for a suitable $C_n$. By this and the analysis of paradifferential operator done in Section 10 Chapter 13 in \cite{Taylor}, it is easy to see that $$\{M_u(x,\xi)\,:\,u\in B_{s,q}(0,n)\}\,\, \text{is bounded in}\,\,C^{r}_*S^2_{1,0}\cap S^{2}_{1,1}\,.$$
Due to Proposition 10.4-10.5 in \cite{Taylor} Chapter 13, the claim follows.
\end{proof}
\begin{lemma}
\label{l:uniform strongly}
For any $n\in \mathbb{N}$, there exists $\lambda_n\in \mathbb{R}^+$ such that $\lambda_n+M_{F}^{\sharp}(u;x,\xi)\in S^{2}_{1,\delta}$ is strongly elliptic for any $u\in B_{s,q}(0,n)$.\\
Furthermore, there exist $c_n,L_n>0$, such that
\begin{equation}
\label{eq:uniformly strongly}
M_{F}^{\sharp}(u;x,\xi)\geq c_n |\xi|^{2}\,,
\end{equation}
for any $x\in \mathbb{R}^d$, $|\xi|>L_n$ and $u\in B_{s,q}(0,n)$.
\end{lemma}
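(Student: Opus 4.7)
The plan is to first establish the positivity bound (\ref{eq:uniformly strongly}) at the level of the unsmoothed paradifferential symbol $M_F(u;x,\xi)$, and then transfer it to $M_F^{\sharp}$ via the decomposition $M_F=M_u^{\sharp}+M_u^b$ of (\ref{eq:MF decomposition}): since $M_u^b\in S^{2-r\delta}_{1,1}$ is of strictly lower order, its contribution will be absorbed at large $|\xi|$. Once (\ref{eq:uniformly strongly}) is in hand, the strong ellipticity of $\lambda_n+M_F^{\sharp}$ in the sense of (\ref{eq:strong ellipticity p}) is immediate for any $\lambda_n\in\R^+$, using $|\xi|^2\geq \tfrac{1}{2}\langle\xi\rangle^2$ for $|\xi|\geq 1$.

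The key step for invoking (H4) is the observation that, for $u\in B_{s,q}(0,n)$, the arguments $\Psi_k(D)D^2 u(x)+t\,\psi_{k+1}(D)D^2 u(x)$ that appear inside $\partial F/\partial\zeta_\alpha$ in the definition of $m_k^{u,\alpha}(x)$ lie in a bounded subset of $\r$, uniformly in $k\in\N$, $t\in[0,1]$, $x\in\R^d$ and $u\in B_{s,q}(0,n)$. Indeed, the embedding (\ref{eq:regularity for trace space}) gives $|u|_{C^{2+r}_*}\leq C|u|_{W^{s+2-2/q}_q}\leq C_n$, and the Fourier multipliers $\Psi_k(D)$ and $\psi_{k+1}(D)$ have $L^\infty$-operator norms bounded independently of $k$ (their convolution kernels share a common $L^1$-bound), so $|\Psi_k(D)D^2 u|_{L^\infty}+|\psi_{k+1}(D)D^2 u|_{L^\infty}\leq C'_n$. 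This fixes a radius $n'=n'(n)$ with respect to which (H4) applies.

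Applying (H4) to the principal part $|\alpha|=2$ then yields
\[
\sum_{|\alpha|=2}m_k^{u,\alpha}(x)\,\xi^\alpha\geq c'_{n'}|\xi|^2,
\]
uniformly in $k,x,u$. Multiplying by $\psi_{k+1}(\xi)\geq 0$, summing in $k$, and using $\sum_{k=0}^{\infty}\psi_{k+1}(\xi)=1-\Psi_0(\xi)=1$ for $|\xi|\geq 1$ (a direct consequence of (\ref{eq:partition}) and the support of $\Psi_0$) gives the desired lower bound for the principal part of $M_F(u;x,\xi)$. The lower-order terms $|\alpha|\leq 1$ are controlled by $C_n(1+|\xi|)$ using (H3) and the same uniform $L^\infty$ bounds, and are therefore absorbed into $\tfrac{c'_{n'}}{2}|\xi|^2$ as soon as $|\xi|>L_n^{(1)}$ for $L_n^{(1)}$ large enough. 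Hence $M_F(u;x,\xi)\geq \tfrac{c'_{n'}}{2}|\xi|^2$ on $\{|\xi|>L_n^{(1)}\}$, uniformly in $u\in B_{s,q}(0,n)$.

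Finally, by Lemma \ref{l:uniform boundness} the set $\{M_u^b:u\in B_{s,q}(0,n)\}$ is bounded in $S^{2-r\delta}_{1,1}$, so the zeroth seminorm yields
\[
|M_F^{\sharp}(u;x,\xi)-M_F(u;x,\xi)|=|M_u^b(x,\xi)|\leq C''_n\langle\xi\rangle^{2-r\delta}.
\]
Since $r\delta>0$, this quantity is of strictly lower order than $|\xi|^2$: enlarging $L_n^{(1)}$ to some $L_n$ for which $C''_n\langle\xi\rangle^{2-r\delta}\leq \tfrac{c'_{n'}}{4}|\xi|^2$ whenever $|\xi|>L_n$, I conclude $M_F^{\sharp}(u;x,\xi)\geq \tfrac{c'_{n'}}{4}|\xi|^2=:c_n|\xi|^2$, which is (\ref{eq:uniformly strongly}). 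I expect no genuine obstacle here; the argument is conceptually routine, the only item to track carefully being the chain of $n$-dependent constants passing through the intermediate radius $n'(n)$ obtained from the $L^\infty$ bounds on the smoothed Hessians.
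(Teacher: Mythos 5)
Your proof is correct and follows essentially the same two-step route as the paper: first the uniform lower bound for $M_F(u;x,\xi)$ itself, obtained from the $k$-uniform $L^\infty$ control of $\Psi_k(D)D^2u+t\psi_{k+1}(D)D^2u$ together with (H4) on the $|\alpha|=2$ part and absorption of the $|\alpha|\leq 1$ terms, and then transfer to $M_F^{\sharp}$ by absorbing the lower-order difference. The only (harmless) variation is in the second step, where you invoke the uniform boundedness of $\{M_u^b\}$ in $S^{2-r\delta}_{1,1}$ from Lemma \ref{l:uniform boundness} to get $|M_u^b(x,\xi)|\leq C\langle\xi\rangle^{2-r\delta}$, whereas the paper re-estimates $M_u^{\sharp}-M_F$ directly through the mollifier bound $|J_{\varepsilon_j}m_k^{u,\alpha}-m_k^{u,\alpha}|_{L_\infty}\lesssim\varepsilon_j^{r}$; both give a symbol of order strictly less than $2$, which is all that is needed.
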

\begin{proof}
As in the proof of Lemma \ref{l:uniform boundness}, we have $|u|_{C^{2+r}}\leq C_n$. For clarity we divide the proof in two steps.\\
\textit{Step 1 - $M_F(u;x,\xi)$ is strongly elliptic and satisfy an estimate similar to (\ref{eq:uniformly strongly})}.
Since $|\Psi_k(D)D^2u+\psi_k(D)u|_{L_{\infty}}\leq \tilde{C}_n$ for any $t\in[0,1]$ and $u\in B_{s,q}(0,n)$, we have
\begin{align*}
\tilde{M}_F&(u;x,\xi) :=\sum_{|\alpha|=2} \sum_{k=0}^{\infty} m_{k}^{u,\alpha}(x)\xi^{\alpha}\psi_{k+1}(\xi)\\
&=\sum_{k=0}^{\infty}\int_0^1 dt\,\left( \sum_{|\alpha|=2}  \frac{\partial F}{\partial \zeta_{\alpha}}(\Psi_k(D)D^2u + t\psi_{k+1}(D)D^2 u)\xi^{\alpha}\right)  \psi_{k+1}(\xi)\\
&\geq c'_n \langle\xi\rangle^{2}\,,
\end{align*}
by hypothesis (H4). Furthermore
\begin{align*}
|&M_{F}(u;x,\xi)- \tilde{M}_{F}(u;x,\xi)|\\
&= \sum_{|\alpha|\leq 1}\sum_{k=0}^{\infty} \left(\int_0^1 dt\,   \frac{\partial F}{\partial \zeta_{\alpha}}(\Psi_k(D)D^2u + t\psi_{k+1}(D)D^2 u)\right)\xi^{\alpha}  \psi_{k+1}(\xi)\\
&\leq \sup_{|\zeta|\leq \tilde{C}_n}|D^1 F(\zeta)|\,\langle\xi\rangle\,. 
\end{align*}
Set $M_n:=\sup_{|\zeta|\leq \tilde{C}_n}|D^1 F(\zeta)|$, we have
\begin{align*}
M_{F}(u;x,\xi)&= \tilde{M}_{F}(u;x,\xi) + (M_{F}(u;x,\xi)-\tilde{M}_{F}(u;x,\xi))\\
&\geq c'_n  \langle\xi\rangle^{2} - M_n \langle\xi\rangle= \langle\xi\rangle^{2}\left( c'_n - \frac{M_n}{\langle\xi\rangle}  \right)\geq \frac{c'_n}{2}\langle\xi\rangle^{2}\,,
\end{align*}
for $\langle\xi\rangle\geq 2M_n/c'_n$.\\
\textit{Step 2 - Conclusion}. Note that
\begin{align*}
|M^{\sharp}_u(x,\xi) - M_F(u;x,\xi)| &=\sum_{j=0}^{\infty}  \Big(J_{\varepsilon_{j}}(M_F(u;x,\xi))-M_F(u;x,\xi)\Big)\\
&= 
\sum_{j,k=0}^{\infty} \sum_{|\alpha|\leq 2}  \Big(J_{\varepsilon_j}m_{k}^{u,\alpha}-m_{k}^{u,\alpha}\Big)\psi_{k+1}(\xi)\psi_j(\xi)\xi^{\alpha}\,.
\end{align*}
Furthermore the set $\{m_{k}^{u,\alpha}\,:\,u\in B_{s,q}(0,n)\,:\,|\alpha|\leq 2\}$ is bounded in $C^r_{*}$, since $|\Psi_k(D)D^2u+\psi_k(D)u|_{L_{\infty}}\leq \tilde{C}_n$ for any $t\in[0,1]$ and $u\in B_{s,q}(0,n)$. \\
By Lemma 9.8 in Chapter 13 of \cite{Taylor}, we have $|J_{\varepsilon_j}m_{k}^{u,\alpha}-m_{k}^{u,\alpha}|_{L_{\infty}}\leq C_n \varepsilon_j^r$; where $C_n\in \mathbb{R}^+$ depends only on $n\in \mathbb{N}$.\\
Recall that $\varepsilon_j=2^{-j}$ and $\text{supp}\,\psi_j \sim 2^j$, this implies
\begin{align*}
|M^{\sharp}_u(x,\xi) - M_F(u;x,\xi)| &= \sum_{j,k=0}^{\infty} \sum_{|\alpha|\leq 2}  \Big(J_{\varepsilon_j}m_{k}^{u,\alpha}-m_{k}^{u,\alpha}\Big)\psi_{k+1}(\xi)\psi_j(\xi)\xi^{\alpha}\\
&\leq \sum_{j=0}^{\infty}  C_n 2^{-j r} \langle\xi\rangle^2\psi_j(\xi) \leq \tilde{R}_n \langle \xi\rangle^{2-r}\,;
\end{align*}
for $\tilde{R}_n\in \mathbb{R}^+$ suitable. Writing 
$$M^{\sharp}_u(x,\xi) = M_F(u;x,\xi) + (M^{\sharp}_u(x,\xi) - M_F(u;x,\xi))\,,$$ 
with the same argument performed at the end of \textit{Step 1}, one can easily conclude the proof.
\end{proof}
With this in hands, we can turn to the proof of Proposition \ref{prop:Hinfinite M}.
\begin{proof}[Proof of Proposition \ref{prop:Hinfinite M}]
Due to Lemmas \ref{l:uniform boundness}-\ref{l:uniform strongly} and the decomposition 
$$M_{F}(u;x,\xi)=M_{u}^{\sharp}(x,\xi)+M_u^b(x,\xi)\,,$$ 
the claim follows by Corollary \ref{cor:pseudo sectioriality strongly elliptic}.
\end{proof}
We turn to the proof of property (P1). To do this, we recall the following result, extracted by a more general result proven in \cite{Taylor}; for the sake of completeness we sketch the proof.
\begin{proposition}
\label{prop:bound 1,1}
Let $p(x,\xi)\in S^m_{1,1}$ be an elementary symbol, i.e.
\begin{equation}
\label{eq:elementary}
p(x,\xi)=\sum_{k=0}^{\infty} Q_k(x)\varphi_k(\xi)\,,
\end{equation}
where $\varphi_{k}(\xi)$ is a bounded sequence in $S^m_{1}$ and $\text{supp}\,\varphi_k\subset \{\xi\,:\,a2^{k-1}<|\xi|<a2^{k+1}\}$ for some $a>0$. Then for any $s>0$ and $q\in (1,\infty)$
\begin{equation*}
p(x,D): H^{m+s}_q\rightarrow H^s_q\,,
\end{equation*}
with operator norm bounded by
\begin{equation}
\label{eq:norm elementary bound}
C_{s,m,q} \Big\{\sup_{k\geq0} |Q_k|_{L_{\infty}} + \sup_{k\geq0}  2^{- l k}|Q_k|_{C^l_*} \Big\}\,,
\end{equation}
for all $q\in(1,\infty)$ and $0<s < l$; here $C_{s,m,q}$ denotes a positive constant which depends only on $s,m,q$ and the sequence $\varphi_k$ in (\ref{eq:elementary}).
\end{proposition}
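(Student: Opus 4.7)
The plan is to combine the Littlewood-Paley square-function characterization
\begin{equation*}
|g|_{H^s_q}\approx \Big|\Big(\sum_{i\geq 0}4^{si}|\psi_i(D)g|^2\Big)^{1/2}\Big|_{L_q},\qquad s>0,\ q\in(1,\infty),
\end{equation*}
with a Bony-type paraproduct decomposition of the coefficients $Q_k$. Fix a large integer $N$ (to separate frequency supports cleanly) and decompose $Q_k=\Psi_{k-N}(D)Q_k+\sum_{j\geq k-N}\psi_j(D)Q_k$, so that $p(x,D)f=Af+Bf$ with
\begin{align*}
Af &:= \sum_k (\Psi_{k-N}(D)Q_k)\,\varphi_k(D)f,\\
Bf &:= \sum_k\sum_{j\geq k-N}(\psi_j(D)Q_k)\,\varphi_k(D)f.
\end{align*}
A repeated ingredient is that $\varphi_k$ is bounded in $S^m_{1}$ with support in $\{|\xi|\sim 2^k\}$, which gives a pointwise majorization $|\varphi_k(D)f(x)|\leq C\,2^{mk}|\widetilde{\psi}_k(D)f(x)|$ for a suitably fattened band projection $\widetilde{\psi}_k$.

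The first step is to treat the paraproduct term $Af$. For $N$ large enough, the Fourier support of $(\Psi_{k-N}(D)Q_k)\varphi_k(D)f$ sits in the annulus $\{|\xi|\sim 2^k\}$, so the family $\{(\Psi_{k-N}(D)Q_k)\varphi_k(D)f\}_k$ is essentially already a Littlewood-Paley decomposition of $Af$. Using $|\Psi_{k-N}(D)Q_k|_{L_\infty}\leq C\sup_i|Q_i|_{L_\infty}$, the pointwise majorization above, and the vector-valued Fefferman-Stein maximal inequality in $L_q(\ell^2)$, one concludes
\begin{equation*}
|Af|_{H^s_q}\leq C\,\sup_k|Q_k|_{L_\infty}\cdot|f|_{H^{m+s}_q}.
\end{equation*}

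The second step handles the delicate term $Bf$. Each summand has Fourier support in $\{|\xi|\leq C\,2^j\}$, so only pairs with $j\geq i-C$ contribute to $\psi_i(D)(Bf)$. Combining the Zygmund-band estimate $|\psi_j(D)Q_k|_{L_\infty}\leq C\,2^{-jl}|Q_k|_{C^l_*}$ with the pointwise bound on $\varphi_k(D)f$ and summing in $j$ (for fixed $k,i$) produces a Schur-type kernel $\alpha_{ik}$, namely $\alpha_{ik}=2^{(s-l)(i-k)}$ for $k\leq i$ and $\alpha_{ik}=2^{s(i-k)}$ for $k>i$. The crucial point is that both tails decay geometrically \emph{precisely because $s>0$ and $s<l$}, giving uniform bounds $\sum_i\alpha_{ik}+\sum_k\alpha_{ik}\leq C$. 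Applying Schur's lemma in $\ell^2$ and Fefferman-Stein in $L_q(\ell^2)$ yields
\begin{equation*}
|Bf|_{H^s_q}\leq C\,\sup_k 2^{-lk}|Q_k|_{C^l_*}\cdot|f|_{H^{m+s}_q},
\end{equation*}
which combined with the estimate for $Af$ gives the desired bound.

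The main obstacle is the $S^m_{1,1}$ loss governing $Bf$: because $Q_k$ is controlled only in $L_\infty$ plus the $C^l_*$ norm, the high-high interactions $(\psi_j(D)Q_k)\varphi_k(D)f$ can generate arbitrarily high frequencies with no symbolic gain to absorb them, and summability depends sensitively on having both $s>0$ and $s<l$ to close Schur's estimate in opposite directions of the double sum. Executing the bookkeeping cleanly requires the fattened cut-offs $\widetilde{\psi}_k$ so that the pointwise majorizations by Littlewood-Paley pieces of $f$ can be transferred to the $L_q$ level via Fefferman-Stein before the $\ell^2$ summation is performed.
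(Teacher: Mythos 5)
Your argument is correct in substance, but it takes a different (and more self-contained) route than the paper. The paper's proof is a two-line reduction: compose with the Bessel potential $\Lambda^{-m}$, observe that $\tilde p(x,\xi)=p(x,\xi)\langle\xi\rangle^{-m}$ is again an elementary symbol, now in $S^0_{1,1}$, and then quote the $m=0$ case from Taylor (vol.\ III, pp.\ 52--54). You instead carry out the underlying harmonic analysis directly for general $m$: the Bony-type splitting of the coefficients $Q_k$ into $\Psi_{k-N}(D)Q_k$ plus the high-frequency tail, the annulus-support observation for the paraproduct piece $Af$, and the Schur/geometric-series bookkeeping for $Bf$ with the kernel $\alpha_{ik}$, whose summability is exactly where both hypotheses $s>0$ and $s<l$ enter. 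This is essentially the content of the reference the paper cites, so what you gain is transparency about where the constant in \eqref{eq:norm elementary bound} comes from and why the range $0<s<l$ is sharp, at the cost of redoing work that the lifting trick lets one outsource to the literature. One point to fix before your sketch becomes a proof: the claimed pointwise majorization $|\varphi_k(D)f(x)|\leq C2^{mk}|\widetilde\psi_k(D)f(x)|$ is false as stated (a Fourier multiplier does not dominate pointwise); the correct statement is $|\varphi_k(D)f(x)|\leq C2^{mk}\,M(\widetilde\psi_k(D)f)(x)$ with $M$ the Hardy--Littlewood maximal operator, obtained from the $L_1$-normalized kernel bound $|\mathcal{F}^{-1}\varphi_k(y)|\leq C2^{(m+d)k}(1+2^k|y|)^{-d-1}$. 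Since you already invoke the Fefferman--Stein inequality in $L_q(\ell^2)$ to absorb exactly such maximal functions, this is an imprecision of wording rather than a gap in the argument, but the maximal function must appear for the $\ell^2$ and $L_q$ steps to be legitimate.
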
 
\begin{proof}[Sketch of the proof]
Consider the pseudodifferential operator $\Lambda^{\mu}$ with symbol $\langle\xi\rangle^{\mu}\in S^{\mu}_1$, for any $\mu \in \mathbb{R}$. Furthermore, it is easy to see that $\Lambda^{\mu}$ is an isometric isomorphism between $H^{s+\mu}_q$ and $H^{s}_q$, for all $s\in \mathbb{R}$ and $q\in(1,+\infty)$. By this, it is enough to prove the $m=0$ case of Proposition \ref{prop:bound 1,1}. Indeed, suppose $m\neq 0$ and $p(x,\xi)\in S^m_{1,1}$ is an elementary symbol of the form (\ref{eq:elementary}), then the symbol
\begin{equation*}
\tilde{p}(x,\xi)=p(x,\xi)\langle\xi\rangle^{-m} =\sum_{k=0}^{\infty} Q_k(x)\varphi_k(\xi)\langle\xi\rangle^{-m}\,,
\end{equation*}
belongs to $S^0_{1,1}$ and it is elementary. If the claim in the Proposition \ref{prop:bound 1,1} is valid for $m=0$ then 
\begin{multline}
\label{eq:proof 1,1}
|p(x,D)\Lambda^{-m}u|_{H^{s}_q}=|\tilde{p}(x,D)u|_{H^{s}_q} \\
\leq C_{s,m,q} \Big\{\sup_{k\geq0} |Q_k|_{L_{\infty}} + \sup_{k\geq0}  2^{- l k}|Q_k|_{C^l_*} \Big\}|u|_{H^s_q}\,.
\end{multline}
Now, for any $u\in H^{s}_q$ then $\Lambda^{-m}u=v\in H^{m+s}_q$ and $|v|_{H^{s+m}_q}=|u|_{H^s_q}$; using this in (\ref{eq:proof 1,1}) we obtain the claim.\\
The $m=0$ case follows by the analysis in \cite{Taylor} vol III pp. 52-54.
\end{proof}
Now we can prove the property (P1).
\begin{proof}[Proof of property (P1)]
It is sufficient to prove that, for any $u,v\in V$ and $\alpha$ such that $|\alpha|\leq 2$, the pseudodifferential operator with symbol
\begin{equation}
\label{eq:M difference}
M_F^{\alpha}(u;x,\xi) - M_{F}^{\alpha}(v;x,\xi) = \sum_{k=0}^{\infty} (m_{k}^{u,\alpha}-m_k^{v,\alpha}) \psi_{k+1}(\xi)\xi^{\alpha}\,,
\end{equation}
maps $H^{s+2}_q$ in $H^{s}_q$ with operator norm bounded by $C_{s,q}|u-v|_{C^{2+r}_*}$, since $W^{s+2-2/q}_q\hookrightarrow C^{2+r}_*$.\\
It is easy to see that the symbol in (\ref{eq:M difference}) is an elementary symbol, in virtue of Proposition \ref{prop:bound 1,1}, we have to prove the existence of a constant $C$ such that
\begin{align}
\label{bound mk 1}
|m_{k}^{u,\alpha}-m_k^{v,\alpha}|_{L_{\infty}}\leq C|u-v|_{C^2}\,,\\
\label{bound mk2}
2^{- l k}|m_{k}^{u,\alpha}-m_k^{v,\alpha}|_{C^l}\leq C|u-v|_{C^2}\,,
\end{align}
for all $k,l\in \mathbb{N}$; since $C^2\hookrightarrow C^{2}_*$.\\ 
Now, we rewrite $m_{k}^{u,\alpha}-m_k^{v,\alpha}$ in a convenient way:
\begin{multline}
\label{eq:M coefficient difference}
m_{k}^{u,\alpha}-m_k^{v,\alpha} = \\ \int_{0}^1 dt \int_{0}^1 ds \sum_{|\beta|\leq 2} \frac{\partial F}{\partial \zeta_{\alpha}\partial \zeta_{\beta}}(\lambda^{u,v}_k(s,t))
\cdot(D^{\beta}\Psi_k(D)(u-v)+t\psi_{k+1}(D)D^{\beta}(u-v))\,;
\end{multline}
where, for brevity, we have set
\begin{multline*}
\lambda^{u,v}_k(s,t):=(D^2\Psi_k(D)v+t\psi_{k+1}(D)D^2v)\\
 + s(D^2\Psi_k(D)(u-v)+t\psi_{k+1}(D)D^2(u-v))\,,
\end{multline*}
and $(\lambda^{u,v}_k(s,t))_{\beta}=:\prescript{\beta}{}{\lambda^{u,v}_k(s,t)}$ for all $|\beta|\leq 2$.
By (\ref{eq:Psi})-(\ref{eq:psi}) and Young inequality, we have 
\begin{align}
\label{eq:bound 1}
|\lambda^{u,v}_k(s,t)|_{L_{\infty}}
&\leq C(|u|_{C^2}+|v|_{C^2})\,,\\
\label{eq:bound 2}
|D^{\beta}\Psi_k(D)(u-v)+t\psi_{k+1}(D)D^{\beta}(u-v)|_{L_{\infty}}&\leq C(|u-v|_{C^2})\,,
\end{align}
for all $|\beta|\leq 2$ and $s,t\in [0,1]$. Since $V\subset C^{2+r}_*$ is bounded, we have the values $|u|_{C^l},|v|_{C^l}$ are uniformly bounded, so (\ref{bound mk 1}) follows easily by the smoothness hypothesis on $F$ in Theorem \ref{t:well posedness}.\\
Now we move to the proof of (\ref{bound mk2}), for convenience we prove (\ref{bound mk2}) for $l=1$; the general cases follow in the same manner.\\
Take any $j \in \{1,\dots,d\}$, by (\ref{eq:M coefficient difference}) and Leibniz rule
\begin{multline}
\label{eq:M difference derivative}
D_{x_j} (m_{k}^{u,\alpha}-m_k^{v,\alpha}) = \\ \int_{0}^1 dt \int_{0}^1 ds \sum_{|\beta|\leq 2} D_{x_j}\left(\frac{\partial F}{\partial \zeta_{\alpha}\partial \zeta_{\beta}}(\lambda^{u,v}_k(s,t))\right)
\cdot(\varrho^{u,v}_k(t)) 
\\+ \int_{0}^1 dt \int_{0}^1 ds \sum_{|\beta|\leq 2} \frac{\partial F}{\partial \zeta_{\alpha}\partial \zeta_{\beta}}(\lambda^{u,v}_k(s,t))
\cdot D_{x_j}(\varrho^{u,v}_k(t))	\,;
\end{multline}
where
\begin{equation*}
\varrho^{u,v}_k(t):=D^{\beta}\Psi_k(D)(u-v)+t\psi_{k+1}(D)D^{\beta}(u-v)\,.
\end{equation*}
For the second term in the RHS of (\ref{eq:M difference derivative}) we can use the bound in (\ref{eq:bound 1}) and 
\begin{equation}
\label{eq:u higher bound}
|D_{x_j} [D^{\beta}\Psi_k(D)(u-v)+t\psi_{k+1}(D)D^{\beta}(u-v)]|_{L_{\infty}}\leq C2^{k}(|u-v|_{C^2})\,,
\end{equation}
another time by Young inequality.\\
For the first terms in the RHS of (\ref{eq:M difference derivative}) one can use the composition rules, and obtains
\begin{equation*}
D_{x_j}\left(\frac{\partial F}{\partial \zeta_{\alpha}\partial \zeta_{\beta}}(\lambda^{u,v}_k(s,t))\right) =
\sum_{|\mu|\leq 2} \frac{\partial F}{\partial \zeta_{\alpha}\partial \zeta_{\beta} \partial\zeta_{\mu}}(\lambda^{u,v}_k(s,t))\cdot D_{x_j}[\prescript{\mu}{}{\lambda^{u,v}_k(s,t)}]\,.
\end{equation*} 
To bound the previous term, note 
\begin{equation*}
|D_{x_j}(D^{\mu}\Psi_k(D)u+t\psi_{k+1}(D)D^{\mu}u)|_{L_{\infty}} \leq C 2^{k}|D^2 u|_{L_{\infty}}\,.
\end{equation*}
Using the previous bound, the inequality in (\ref{eq:bound 1}), (\ref{eq:u higher bound}) and the smoothness hypothesis on $F$, we obtain (\ref{bound mk2}).
\end{proof}
\section{Proof of the Main Results}
\label{s:proof of the main results}
\subsection{Proof of Theorem \ref{t:well posedness} and Parabolic Regularization}
\label{ss:deterministic proof}
The Proof of Theorem \ref{t:well posedness} is based on the abstract framework developed in \cite{Pruss} Chapter 5; in this section we will largely follow its exposition. In \textit{Step 2} of the following, we use the same argument of the proof of Corollary 5.1.2 in \cite{Pruss}.
\begin{proof}[Proof of Theorem \ref{t:well posedness}]
For reader's convenience, we divide the proof into two steps.\\
\textit{Step 1 - Local Existence}. As explained in Section \ref{s:introduction} the parabolic problem (\ref{EQ}) is reduced to the following abstract quasilinear parabolic PDEs 
\begin{equation}
\label{eq:proof recall equation}
u'+M_F(u)u =G(u)\,,\,\,t>0\,,\qquad u(0)=u_0\,.
\end{equation}
Fix any $n$ such that $u_0\in B_{s,q}(0,n)$, thanks to Proposition \ref{prop:Hinfinite M} and Remark \ref{r:maximal}, there exists a $\lambda_n\in \mathbb{R}$ such that $\lambda_n + M_{F}(u)\in\mathcal{MR}_q(H^s_q)$ (recall that $H^s_q$ are UMD-space for all $s\in \mathbb{R}$ and $q\in(1,\infty)$). Due to the properties (P1)-(P2) we can apply Theorem 5.1.1 in \cite{Pruss} and conclude the existence of a unique local solution to (\ref{eq:proof recall equation}) in
\begin{equation}
\label{eq:regularity class}
H^1_{q}(0,T;H^s_q)\cap L_{q}(0,T;H^{2+s}_q)\cap C([0,T];W^{2+s-\frac{2}{q}}_{q})\,;
\end{equation}
for a suitable $T>0$.\\ 
Furthermore, for each $u_0\in W^{s+2-2/q}_q$, there exists $t(u_0)>0$ and $\varepsilon(u_0)>0$ such that for any $v\in B_{s,q}(u_0,\varepsilon)$ the solution to (\ref{eq:proof recall equation}) with initial data $v$ exists on $[0,t(u_0)]$. \\
\textit{Step 2 - Maximal defined solution and blow up criterion}. Define 
\begin{equation*}
T(u_0):=\sup\{a>0\,:\, \text{(\ref{eq:proof recall equation}) has a solution on}\,\, [0,a]\,\,\text{in (\ref{eq:regularity class})} \}\,.
\end{equation*}
If $T(u_0)=\infty$ we have nothing to prove, for this suppose $T(u_0)<\infty$ and (\ref{eq:lim extended}) does not hold. So $\lim_{t\nearrow T(u_0)} u(t)$ exists in $W^{s+2-2/q}_q$, in particular the set $u([0,T(u_0)])$ is a compact subset of $W^{s+2-2/q}_q$. By the previous step and an easy compactness argument, it easy to see that there exists a $\delta>0$ such that the following problem
\begin{equation}
\label{eq:problem blow up}
v'+M_F(v)=G(v)\,,\,\,t>0\,,\qquad v(0)=u(s)\,;
\end{equation}
has a solution in $H^1_{q}(0,\delta;H^s_q)\cap L_{q}(0,\delta;H^{2+s}_q)\cap C([0,\delta];W^{2+s-\frac{2}{q}}_{q})$ for any $s\in[0,T(u_0)]$. Take $t'$ such that $T(u_0)-\delta<t'<T(u_0)$, then solution $v(t)$ to the problem (\ref{eq:problem blow up}) coincides with $u(t+t')$ and extends the solution beyond $T(u_0)$ and this contradicts the definition of $T(u_0)$.
\end{proof}
By hypothesis (H3) one can guess that the solution provided by Theorem \ref{t:well posedness} is more regular than (\ref{eq:regularity class}) in a sense clarified below, in other words, one has the parabolic regularization of the solution.\\
In this direction we state the following proposition (we omit the proof in this paper):
\begin{proposition}
\label{prop:regularity M,G}
Under the hypotheses (H2)-(H4), the maps
\begin{align*}
W^{s+2-2/q}_q\ni u &\mapsto M_{F}(u)\in \B(H^{s+2}_q,H^s_q)\,,\\
W^{s+2-2/q}_q\ni u &\mapsto G(u)\in H^s_q\,,
\end{align*}
are of class $C^{\infty}$ between the indicated spaces.
\end{proposition}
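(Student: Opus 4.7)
The plan is to reduce both assertions to two standard ingredients: smoothness of superposition (Nemytskii) operators built from $F$ and its derivatives on Sobolev spaces that happen to be Banach algebras, and continuity of the symbol-to-operator (quantization) map on appropriate Fréchet symbol classes.

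For $G(u)=-F(x,D^2\Psi_0(D)u)$, I would first observe that $D^2\Psi_0(D)$ is smoothing, since $\Psi_0(\xi)\xi^{\alpha}$ is compactly supported and therefore its symbol lies in $\bigcap_{N}S^{-N}$; in particular $D^2\Psi_0(D)$ is linear continuous from $W^{s+2-2/q}_q$ into $H^s_q$. By (H1) we have $sq>d+2>d$, so $H^s_q$ is a Banach algebra, and the classical theory of superposition operators (see Chapter 2 of \cite{Tools}) gives that $v\mapsto F(\cdot,v)$ is $C^{\infty}$ from $H^s_q$ to $H^s_q$ under (H3); the identity $F(\cdot,v)=F(\cdot,0)+\int_0^1\partial_\zeta F(\cdot,tv)\,v\,dt$ and the algebra property produce all the Fréchet derivatives. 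Composition with the linear smoothing operator $D^2\Psi_0(D)$ then yields the smoothness of $G$.

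For $M_F$ the strategy is to lift the problem to the symbol level. Writing
\[
M_F(u;x,\xi)=\sum_{|\alpha|\le 2}\sum_{k\ge 0} m_k^{u,\alpha}(x)\,\psi_{k+1}(\xi)\,\xi^{\alpha},
\]
I would show that $u\mapsto M_F(u;\cdot,\cdot)$ is $C^{\infty}$ with values in the Fréchet space $C^r_*S^2_{1,0}\cap S^2_{1,1}$, by computing explicitly the Fréchet derivatives of each coefficient $m_k^{u,\alpha}$. Differentiating in $u$ in directions $v_1,\dots,v_j\in W^{s+2-2/q}_q$ produces multilinear expressions involving higher derivatives of $\partial F/\partial\zeta_{\alpha}$ (all bounded thanks to (H3)) evaluated at the same Littlewood-Paley truncations of $D^2 u$, multiplied by the corresponding Littlewood-Paley pieces of $D^2 v_i$. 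The uniform-in-$k$ estimates already used in Lemma \ref{l:uniform boundness} and in the proof of property (P1) propagate to each derivative, so that the symbol estimates defining $C^r_*S^2_{1,0}\cap S^2_{1,1}$ hold for every multilinear derivative as well. Invoking the smoothing-symbol decomposition $M_F(u)=M_u^{\sharp}+M_u^b$ from Subsection \ref{ss:proof of Propositon and P1} and the boundedness of the quantization map from $S^2_{1,\delta}$ (standard) and from the elementary subclass of $S^{2-r\delta}_{1,1}$ (via Proposition \ref{prop:bound 1,1}) into $\B(H^{s+2}_q,H^s_q)$, one then transfers smoothness from the symbol level to the operator level.

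The main obstacle is precisely the uniform-in-$k$ control of \emph{all} Fréchet derivatives of the symbol. Each differentiation in $u$ costs one extra Littlewood-Paley factor acting on a test direction, which increases the symbol growth by a single power of $\langle\xi\rangle$; however, because the frequency supports of the $\psi_{k+1}$ remain lacunary and $F$ has uniformly bounded derivatives of every order, the series defining each multilinear derivative can be estimated by the same pattern used in the proofs of Lemmas \ref{l:uniform boundness}--\ref{l:uniform strongly} and of property (P1). Once this uniform-in-$k$ control is secured, the chain rule together with the continuity of the quantization map gives the claim for all orders.
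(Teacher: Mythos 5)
The paper itself offers no proof of Proposition \ref{prop:regularity M,G} (it is explicitly omitted), so there is nothing to compare your argument against; judged on its own merits, your plan is the natural one and is essentially sound. For $G$ the chain ``smoothing operator $D^2\Psi_0(D)$ followed by a Nemytskii operator on the Banach algebra $H^s_q$'' is correct; just note that for the higher Fr\'echet derivatives you must handle the fact that $\partial_\zeta^kF(\cdot,v)$ is not itself in $H^s_q$ (it does not decay), so each derivative needs the same splitting $\partial_\zeta^kF(\cdot,v)=\partial_\zeta^kF(\cdot,0)+[\partial_\zeta^kF(\cdot,v)-\partial_\zeta^kF(\cdot,0)]$, the first term acting as a bounded pointwise multiplier and the second lying in $H^s_q$ by Moser-type estimates. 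For $M_F$, differentiating the coefficients $m_k^{u,\alpha}$ and controlling the resulting elementary symbols via Proposition \ref{prop:bound 1,1} is exactly the right mechanism, and the estimates are verbatim those of the proof of (P1) applied to the multilinear remainders.

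One assertion in your ``main obstacle'' paragraph is wrong as stated and would sink the argument if it were true: differentiation in $u$ does \emph{not} increase the symbol order by a power of $\langle\xi\rangle$. The $j$-th derivative of $m_k^{u,\alpha}$ in directions $v_1,\dots,v_j$ involves factors of the form $\Psi_k(D)D^{\beta}v_i+t\psi_{k+1}(D)D^{\beta}v_i$ with $|\beta|\le2$, which are bounded in $L_\infty$ by $C|v_i|_{C^2}$ \emph{uniformly in} $k$ (Young's inequality, as in (\ref{eq:bound 2})); hence the $j$-th derivative symbol is still an elementary symbol of order $2$, multilinearly bounded by $\prod_i|v_i|_{C^{2+r}_*}$. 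What does cost factors of $2^k\sim\langle\xi\rangle$ is differentiation in $x$, needed only for the seminorms $2^{-lk}|Q_k|_{C^l_*}$ in (\ref{eq:norm elementary bound}), exactly as in the proof of (P1). Your subsequent sentences suggest you intend the correct estimate, but the claim should be corrected. Finally, the detour through the decomposition $M_F(u)=M_u^{\sharp}+M_u^{b}$ is unnecessary here and only complicates the bookkeeping: since $M_F(u;x,\xi)$ and all its $u$-derivatives are already elementary symbols, Proposition \ref{prop:bound 1,1} applies to them directly; the smoothed decomposition is needed only for the $\mathcal{H}^{\infty}$-calculus of Proposition \ref{prop:Hinfinite M}, not for operator-norm bounds.
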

\begin{theorem}[Parabolic Regularization]
Suppose that the hypotheses (H2)-(H4) are satisfied. Then there exist $r>0$ and $T>0$ such that the map
\begin{equation*}
\psi : B_{s,q}(u_0,r) \rightarrow C^{\infty}((0,T);H^{s+2}_q)\,, \qquad \psi(v)(\cdot)=u(\cdot,v)\,;
\end{equation*}
is of class $C^{\infty}$; where $u(\cdot,v)$ is the solution to (\ref{EQ}) with initial data $v$ provided by Theorem \ref{t:well posedness}. In particular, $u(\cdot,u_0)\in C^{\infty}((0,T(u_0));H^{s+2}_q)$, for all $u_0 \in W^{s+2-2/q}_q$.
\end{theorem}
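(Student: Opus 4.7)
The plan is the classical implicit-function-theorem-plus-scaling argument (as in Chapter 5 of \cite{Pruss}), adapted to the paradifferential quasilinear formulation (\ref{eq:proof recall equation}). Set
$$\mathbb{E}_T := H^1_q(0,T;H^s_q) \cap L_q(0,T;H^{s+2}_q) \cap C([0,T];W^{s+2-2/q}_q),$$
and define $\mathcal{N} : \mathbb{E}_T \times W^{s+2-2/q}_q \to L_q(0,T;H^s_q) \times W^{s+2-2/q}_q$ by
$$\mathcal{N}(u,v) := \bigl(u' + M_F(u)u - G(u),\; u(0)-v\bigr).$$
By Proposition \ref{prop:regularity M,G}, together with standard Nemytskii-type estimates on mixed Sobolev spaces, $\mathcal{N}$ is of class $C^\infty$. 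At the reference pair $(u(\cdot,u_0),u_0)$ the partial derivative $D_u\mathcal{N}$ is a linear non-autonomous Cauchy problem whose leading part is $M_F(u(t,u_0))$; by Proposition \ref{prop:Hinfinite M}, Remark \ref{r:maximal} and standard perturbation theory for maximal $L_q$-regularity (Chapter 3 of \cite{Pruss}), it is an isomorphism $\mathbb{E}_T \to L_q(0,T;H^s_q)\times W^{s+2-2/q}_q$ provided $T$ is small enough. The implicit function theorem thus produces a $C^\infty$ local inverse: the solution map $v \mapsto u(\cdot,v)$ is $C^\infty$ from a ball $B_{s,q}(u_0,r)$ into $\mathbb{E}_T$.

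To obtain $C^\infty$ time-regularity one runs the same argument with an additional time-rescaling parameter. For $\lambda \in (1-\delta,1+\delta)$ with $\delta>0$ small, the function $u_\lambda(t):=u(\lambda t,v)$ satisfies
$$u_\lambda'(t) + \lambda M_F(u_\lambda(t))u_\lambda(t) = \lambda G(u_\lambda(t)), \qquad u_\lambda(0)=v.$$
Multiplication of $M_F(u)$ by a positive scalar $\lambda$ preserves both strong ellipticity and the $\mathcal{H}^\infty$-angle; inspection of Lemmas \ref{l:uniform boundness}, \ref{l:uniform strongly} and Corollary \ref{cor:pseudo sectioriality strongly elliptic} shows that the constants in Proposition \ref{prop:Hinfinite M} may be chosen uniformly for $\lambda$ in compact subsets of $(0,\infty)$. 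Since the $\lambda$-dependence is polynomial (in fact linear), a joint application of the implicit function theorem yields that $(\lambda,v) \mapsto u_\lambda(\cdot,v)$ is $C^\infty$ from $(1-\delta,1+\delta) \times B_{s,q}(u_0,r)$ into $\mathbb{E}_{T'}$ for some $0<T'<T$.

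Differentiating the identity $u_\lambda(t,v)=u(\lambda t,v)$ in $\lambda$ at $\lambda=1$ gives $t\,\partial_t u(\cdot,v) \in \mathbb{E}_{T'}$, hence $\partial_t u(\cdot,v) \in L_q(\tau,T';H^{s+2}_q)$ for every $\tau \in (0,T')$. Iterating with higher $\lambda$-derivatives produces $t^k\partial_t^k u(\cdot,v)\in \mathbb{E}_{T'}$ for every $k\in\mathbb{N}$, so $\partial_t^k u(\cdot,v)\in L_{q,loc}((0,T');H^{s+2}_q)$ for all $k$; a standard time-Sobolev embedding $W^{1,q}_{loc}((0,T');H^{s+2}_q)\hookrightarrow C_{loc}((0,T');H^{s+2}_q)$ then upgrades this to $u(\cdot,v)\in C^\infty((0,T');H^{s+2}_q)$. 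The $C^\infty$ dependence on $v$ in this stronger topology is inherited from the joint smoothness of $(\lambda,v)\mapsto u_\lambda(\cdot,v)$, differentiated any finite number of times in $\lambda$.

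The main obstacle is the uniform-in-$\lambda$ $\mathcal{H}^\infty$-calculus and maximal regularity for $\lambda M_F(u)$: one has to check that the constants $c_n,L_n$ in (\ref{eq:uniformly strongly}) and the seminorms $\mathtt{S}_j^k$ feeding Corollary \ref{cor:pseudo sectioriality strongly elliptic} depend continuously on $\lambda$ in a compact subset of $(0,\infty)$. This reduces to tracking the scaling $p(x,\xi)\mapsto \lambda p(x,\xi)$ through Corollary \ref{cor:pseudo sectioriality strongly elliptic}, which is routine but tedious since each ingredient (strong ellipticity constant, bounds on resolvents, seminorm estimates) scales homogeneously in $\lambda$ and can therefore be controlled by compactness. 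Everything else is an off-the-shelf application of the abstract IFT machinery of \cite{Pruss}.
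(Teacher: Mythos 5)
Your argument is correct and follows essentially the same route as the paper: the paper merely records that the local existence time can be chosen uniformly for data in $B_{s,q}(u_0,r)$ and then invokes Proposition \ref{prop:regularity M,G} together with Theorem 5.2.1 of \cite{Pruss}, whose proof is precisely the implicit-function-theorem-plus-time-rescaling argument you write out in detail. The uniform-in-$\lambda$ maximal regularity that you flag as the main obstacle is indeed routine for $\lambda$ near $1$ (it amounts to replacing $f(z)$ by $f(\lambda z)$ on a slightly larger sector), so nothing essential is missing.
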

\begin{proof}
We begin the proof recalling that, as showed in the proof of Theorem \ref{t:well posedness}, for each $u_0\in W^{s+2-2/q}_q$ there exists an $r>0$ such that the \textit{local} solution of (\ref{EQ}) for initial data $v\in B_{s,q}(u_0,r)$ exists on an interval $[0,T]$ independent on $v$; so the map $\psi$ is well defined for $T>0$ small enough.\\
The claim now follows by Proposition \ref{prop:regularity M,G} and Theorem 5.2.1 in \cite{Pruss}.
\end{proof}
In the last part of this section, we want to weaken the hypothesis (H4), replacing it with the following:
\begin{itemize}
\item[(H4')] There exists an open subset $V$ of $\R\times i\R^d\times \s$, such that for each $n\in \mathbb{N}$ there exists $c'_n>0$ such that
\begin{equation*}
\sum_{|\alpha|=2}\frac{\partial F}{\partial \zeta_{\alpha}}(x,\zeta)\, \xi^{\alpha} \geq c'_n |\xi|^2\,,\qquad \forall \xi
\in \mathbb{R}^d\,,
\end{equation*}
uniformly in $x\in \mathbb{R}^d$ and $\zeta\in  \{|\zeta|\leq n\}\cap  V$.
\end{itemize}
In order to use the Maximal $L_q$-regularity results, we will need a strengthening of hypothesis (H1). To do this, we denote with $R(u):=\{D^2u(x)\,:\,x\in \R^d\}$ the range of $u\in W^{s+2-2/q}_q$ and $\d(U,W):=\inf\{|x-y|\,:\,x\in U\,,\, y\in W\}$, for $U,W$ subsets in $\r$.
\begin{itemize}
\item[(H1')] 
For $u_0$ as in (H1), $\d(R(u_0),\partial V)>0$, (recall that by assumption (H2) we have the inclusion (\ref{eq:regularity for trace space}), so $u_0\in C^{2+r}_*$ for some $r>0$).
\end{itemize}
Roughly speaking, the hypothesis (H4') means that $F$ induce an elliptic operators only on a region $V$.\\
Note that, since $W^{s+2-2/q}_q\hookrightarrow C^{2+r}_*$ ($C_{s,q}$ denotes the boundedness constant in the embedding), then the set
\begin{equation*}
\D:=\{u\in W^{s+2-2/q}_q\,:\, \,\d(R(u),\partial V) >0\}\,,
\end{equation*}
is open in $W^{s+2-2/q}_q$. Indeed, fix $v\in \D$ and set $\delta :=d(R(v),\partial V)>0$, then for all $u\in B_{s,q}(v,\delta/(2C_{s,q}))$, $\zeta \in \partial V$ and $x\in \mathbb{R}^d$, we have
\begin{equation*}
|D^2u(x)-\zeta|\geq \Big||D^2v(x)-\zeta|-|D^2v(x)-D^2u(x)|\Big|\geq \frac{\delta}{2}\,;
\end{equation*} 
by the arbitrariness of $x$ and $\zeta$, we obtain $\d(R(u),\partial V)\geq \delta/2$. Then $B_{s,q}(v,\delta/(2C_{s,q}))\subset\D$ and the claim follows.\\
The following proposition allows us to extend the treatment just proposed under the weaker hypothesis (H4').
\begin{proposition}
Let the hypotheses (H2)-(H3)-(H4') be satisfied and let $n$ be an integer. Then there exists a $\lambda_n\in \mathbb{R}$ such that $\lambda_n + M_F(u)\in \mathcal{H}^{\infty}(H^s_q)$ with $\phi_{\lambda_n + M_F(u)}\leq \phi_n < \pi/2$, for all $$u\in \D_n:=\{u\in W^{s+2-2/q}\,:\,d(R(u),\partial V)>1/n\}\cap B_{s,q}(0,n)\,.$$
Furthermore, the inequality in (\ref{eq:prop M uniform of H infinite}) holds for all $u\in \D_n$.
\end{proposition}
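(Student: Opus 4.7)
The plan is to adapt the proof of Proposition \ref{prop:Hinfinite M} by localising the strong ellipticity argument of Lemmas \ref{l:uniform boundness}-\ref{l:uniform strongly} to the region $V$. The symbol decomposition (\ref{eq:MF decomposition}) and the $S^2_{1,\delta}$- and $S^{2-r\delta}_{1,1}$-boundedness of Lemma \ref{l:uniform boundness} depend only on the $C^{2+r}_*$-bound of $u$, so they hold uniformly for $u \in \D_n$ since $\D_n \subset B_{s,q}(0,n)$. The task is therefore to establish the analogue of Lemma \ref{l:uniform strongly} under the weaker hypothesis (H4').

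The key new ingredient is a localisation step: I claim there exists $k_0 = k_0(n) \in \N$ such that for all $u \in \D_n$, all $k \geq k_0$, all $t \in [0,1]$, and all $x \in \R^d$,
\[
\d\bigl(\Psi_k(D) D^2 u(x) + t\, \psi_{k+1}(D) D^2 u(x),\, \partial V\bigr) \geq \frac{1}{2n}\,.
\]
Since $\Psi_k(D) + t\psi_{k+1}(D) = (1-t)\Psi_k(D) + t\Psi_{k+1}(D)$ is a convex combination, this reduces to showing that $\Psi_j(D)D^2u \to D^2 u$ in $L_\infty$ with a rate that is uniform in $u \in \D_n$. This follows from the standard Littlewood-Paley estimate $|\Psi_j(D)f - f|_{L_\infty} \leq C\, 2^{-jr}|f|_{C^r_*}$ together with the uniform bound $|D^2 u|_{C^r_*} \leq C_n$ coming from Sobolev embedding (as in (\ref{eq:uniform boundness in W})); combined with the defining property $\d(R(u),\partial V) > 1/n$ of $\D_n$, the estimate follows for all sufficiently large $k$.

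Fix such a $k_0$ and set $L_n := 2^{k_0+1}$. Recalling that $\text{supp}\,\psi_{k+1} \subset \{2^{k-1} \leq |\xi| \leq 2^{k+1}\}$, for $|\xi| \geq L_n$ only indices $k \geq k_0$ contribute to the principal part
\[
\tilde{M}_F(u;x,\xi) := \sum_{|\alpha|=2}\sum_{k\geq 0} m_k^{u,\alpha}(x)\psi_{k+1}(\xi)\xi^{\alpha}\,,
\]
and by the previous step the arguments of $\partial F/\partial \zeta_\alpha$ appearing in each $m_k^{u,\alpha}$ lie in the compact subset $K_n := \{\zeta \in V : |\zeta| \leq C_n,\, \d(\zeta,\partial V) \geq 1/(2n)\}$ of $V$. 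Applying (H4') yields a lower bound $\sum_{|\alpha|=2} m_k^{u,\alpha}(x)\xi^\alpha \geq c_n'' |\xi|^2$ for each such $k$; summing against $\sum_{k \geq 0}\psi_{k+1}(\xi) = 1 - \psi_0(\xi) = 1$ (for $|\xi|$ large) gives $\tilde{M}_F(u;x,\xi) \geq c_n''|\xi|^2$ for $|\xi| \geq L_n$, uniformly in $u \in \D_n$. From this point the two-step argument of Lemma \ref{l:uniform strongly} transfers verbatim: $|M_F - \tilde{M}_F|$ is controlled by $O(\langle\xi\rangle)$ (from the $|\alpha| \leq 1$ terms) and $|M_F^\sharp - M_F|$ by $O(\langle\xi\rangle^{2-r})$, both of which are absorbed by taking $\lambda_n$ sufficiently large. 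Corollary \ref{cor:pseudo sectioriality strongly elliptic} then furnishes the bounded $\mathcal{H}^\infty$-calculus with $\mathcal{H}^\infty$-angle $<\pi/2$ and the uniform bound (\ref{eq:prop M uniform of H infinite}), exactly as in the proof of Proposition \ref{prop:Hinfinite M}.

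The main obstacle is the localisation step above: one must verify that the Littlewood-Paley regularisations $\Psi_k(D)D^2 u$ do not escape $V$ as $k$ varies, uniformly in $u \in \D_n$. This hinges on three ingredients: the openness of $V$, the uniform lower bound $\d(R(u),\partial V) > 1/n$ built into the definition of $\D_n$, and the quantitative convergence rate $2^{-kr}$, which makes $k_0$ finite precisely because $r = s - (2+d)/q > 0$ under (H1). Once this is in hand, all subsequent steps are mechanical adaptations of the arguments already developed in Section \ref{s:paradifferential}.
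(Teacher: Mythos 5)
Your proposal is correct and follows essentially the same route as the paper: the paper's own proof simply notes that the argument of Proposition \ref{prop:Hinfinite M} carries over once one observes that $\Psi_k(D)D^2u + t\psi_{k+1}(D)D^2u \to D^2u$ in $L_\infty$ uniformly in $t$, which is exactly the localisation step you make precise. Your version is in fact slightly more careful, since you quantify the rate $2^{-kr}$ via the uniform $C^r_*$-bound on $D^2u$ and thereby get the required uniformity in $u\in\D_n$ explicitly.
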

\begin{proof}
The proof is the same as the proof of Proposition \ref{prop:Hinfinite M} done in Subsection \ref{ss:proof of Propositon and P1}. One has only to observe that
\begin{equation*}
\Psi_k(D)D^2u + t\psi_{k+1}(D)D^2 u\rightarrow_{k\rightarrow\infty} D^2 u\,, \qquad \text{in}\,\,\, L_{\infty}(\r)\,,
\end{equation*}
uniformly in $t\in [0,1]$.
\end{proof}
Now we are ready to prove the existence of a maximal defined solution for the system (\ref{EQ}) under the weaker hypothesis (H4'), note that the blow up criterion (\ref{eq:lim extended}) change its form.
\begin{theorem}
\label{t:fully on a portion}
Let the hypotheses (H1')-(H2)-(H3)-(H4') be satisfied. Then the fully nonlinear parabolic problem (\ref{EQ}) has a unique maximal defined solution of class in 
\begin{equation*}
H^1_{q,loc}([0,T);H^s_q)\cap L_{q,loc}([0,T);H^{2+s}_q)\cap C([0,T);\D)\,;
\end{equation*}
where $T=T(u_0)$. Furthermore, one of the following are satisfied
\begin{itemize}
\item[$i)$] $T(u_0)=\infty$.
\item[$ii)$] $T(u_0)<\infty$ and
\begin{equation*}
\lim_{t\nearrow T(u_0)} u(t)\,\,\,\text{does not exist in}\,\, W^{2+s-\frac{2}{q}}_{q}\,.
\end{equation*}
\item[$iii)$] $T(u_0)<\infty$ and
\begin{equation*}
\d_{\gamma}(u(t),\partial \D)\searrow 0\,,
\end{equation*}
as $t\nearrow T(u_0)$, (here $\d_{\gamma}(u(t),\partial \D):=\inf\{|u(t)-v|_{W^{s+2-2/q}_q}\,:\,v\in \D\}$).
\end{itemize}
\begin{proof}
The proof is similar to proof of Theorem \ref{t:well posedness} and it consists in an easy adaptation of Corollary 5.1.2 in \cite{Pruss}.
\end{proof}

\end{theorem}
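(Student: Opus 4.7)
The plan is to adapt the proof of Theorem \ref{t:well posedness}, using the preceding proposition (the uniform $\mathcal{H}^{\infty}$-bound on $\D_n$) in place of Proposition \ref{prop:Hinfinite M}, and then to run the abstract quasilinear machinery from \cite{Pruss} Chapter 5 on the open subset $\D\subset W^{s+2-2/q}_q$ rather than on the whole trace space. The new alternative (iii) will be handled by combining the openness of $\D$ (already established in the excerpt) with the $1$-Lipschitz continuity of $w\mapsto\d_\gamma(w,\partial\D)$ on $W^{s+2-2/q}_q$.

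First, I would recast (\ref{EQ}) as the abstract quasilinear problem (\ref{eq:quasilinear}) on $\D$. By (H1') we have $u_0\in\D$, hence $u_0\in\D_n$ for some $n\in\mathbb{N}$. The preceding proposition supplies $\lambda_n\in\R$ with $\lambda_n+M_F(u)\in\mathcal{H}^{\infty}(H^s_q)$ of angle below $\pi/2$ uniformly for $u\in\D_n$; together with Remark \ref{r:maximal} and the UMD property of $H^s_q$ this yields uniform maximal $L_q$-regularity on $\D_n$. The Lipschitz properties (P1)--(P2) hold on $B_{s,q}(0,n)\supset\D_n$ without change. Hence Theorem 5.1.1 in \cite{Pruss} applies and gives a unique local solution in the class (\ref{eq:regularity class}) on some $[0,T]$ starting from $u_0$, with the standard continuous-dependence statement: there exist $\varepsilon(u_0),t(u_0)>0$ such that for every $v\in B_{s,q}(u_0,\varepsilon(u_0))\subset\D_n$ the corresponding solution exists on $[0,t(u_0)]$.

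Next, I would set
\begin{equation*}
T(u_0):=\sup\bigl\{a>0:\ \text{(\ref{eq:proof recall equation}) has a solution on }[0,a]\text{ of class (\ref{eq:regularity class}) with values in }\D\bigr\},
\end{equation*}
and argue the alternative. If $T(u_0)=\infty$, case (i) holds. If $T(u_0)<\infty$ and both (ii) and (iii) fail, then $u(t)\to u^{*}$ in $W^{s+2-2/q}_q$ as $t\nearrow T(u_0)$, and by the $1$-Lipschitz continuity of $\d_\gamma(\cdot,\partial\D)$,
\begin{equation*}
\d_\gamma(u^{*},\partial\D)=\lim_{t\nearrow T(u_0)}\d_\gamma(u(t),\partial\D)>0,
\end{equation*}
so $u^{*}\in\D$. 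Extending $u$ continuously by $u(T(u_0)):=u^{*}$ yields a compact curve inside $\D$; by the Sobolev embedding $W^{s+2-2/q}_q\hookrightarrow C^{2+r}_*$ this curve lies in $\D_m$ for some $m\in\mathbb{N}$, and applying Step 1 uniformly at points $u(t')$ with $t'$ close to $T(u_0)$ allows us to glue a continuation past $T(u_0)$, contradicting the definition of $T(u_0)$. Uniqueness follows from local uniqueness by the usual concatenation argument.

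The main delicate point will be the new alternative (iii): one must verify that the only way the abstract quasilinear theory can fail to extend is either a norm singularity, case (ii), or the orbit hitting $\partial\D$, case (iii). The argument above shows these are indeed exhaustive, thanks to the openness of $\D$ in $W^{s+2-2/q}_q$ and the continuity of $\d_\gamma(\cdot,\partial\D)$. The uniformity of the $\mathcal{H}^{\infty}$-bounds on each $\D_n$ — rather than on the balls $B_{s,q}(0,n)$ — is precisely what allows the \cite{Pruss} framework to be applied in this setting, and it is supplied by the preceding proposition.
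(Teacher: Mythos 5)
Your proposal is correct and follows essentially the same route as the paper, which itself only sketches the argument as an adaptation of the proof of Theorem \ref{t:well posedness} together with Corollary 5.1.2 in \cite{Pruss}; you supply exactly the ingredients that adaptation needs (the uniform $\mathcal{H}^{\infty}$-bounds on $\D_n$ from the preceding proposition, the openness of $\D$, and the continuity of $\d_{\gamma}(\cdot,\partial\D)$ to make alternatives (ii) and (iii) exhaustive). The only point worth flagging is that the exclusion of (iii) should be read as ``$\d_{\gamma}(u(t),\partial\D)$ does not tend to $0$'' rather than as the failure of monotone decrease, which is how your limit argument implicitly (and correctly) uses it.
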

\subsection{Parabolic Stochastic PDEs and Proof of Theorem \ref{t:wellposedness stochastic}}
\label{ss:stochastic proof}
In this section we provide the proof of Corollary \ref{t:wellposedness stochastic}, this will be an easy consequence of a more general result, i.e. Theorem \ref{t:wellposedness stochastic refined}.\\
Throughout this section, $(\Omega,\A,\P)$ denotes a probability space, endowed with a filtration $\f=\{\f_t\}_{t\in \R^+}$ which satisfies the usual conditions.\\
In the context of stochastic parabolic fully nonlinear partial differential equations, we can admits that the nonlinearities $F(x,D^2 u)$ depends on $\omega\in \Omega$. For the sake of completeness, below we list our hypotheses:
\begin{itemize}
\item[(S1)] $s>0$, $2\leq q<\infty$ and $q>(2+d)/s$.
\item[(S2)] For each $\omega\in \Omega$, the map $(x,\zeta)\mapsto  F(\omega,x,\zeta)$ is a smooth function of its arguments, its derivatives are uniformly bounded in $x\in \R^d$ and $\omega\in \Omega$.\\ 
Furthermore, $|F(\cdot,0)|_{L_{\infty}(\Omega;H^s_q)}<\infty$ (here $s,q$ are as in (S1)).
\item[(S3)](Strongly parabolicity hypothesis). For each $n\in \mathbb{N}$ there exists $c'_n>0$ such that
\begin{equation*}
\sum_{|\alpha|=2}\frac{\partial F}{\partial \zeta_{\alpha}}(\omega,x,\zeta)\, \xi^{\alpha} \geq c'_n |\xi|^2\,,\qquad \forall \xi
\in \mathbb{R}^d\,,
\end{equation*}
uniformly in $\omega\in \Omega\,,\,x\in \mathbb{R}^d$ and $|\zeta|\leq n$.
\item[(S4)] For each $(x,\zeta)$ and each $|\alpha|\leq 2$, the map 
$$\omega \mapsto \frac{\partial{F}}{\partial \zeta_{\alpha}}(\omega,x,\zeta)$$ 
is $\f_0$-measurable. 
\item[(S5)] $u_0:\Omega \rightarrow W^{s+2-2/q}_q$ is strongly $\f_0$-measurable.
\end{itemize}
Under the hypotheses (S1)-(S4) it is clear that for each $\omega\in \Omega$ we can construct the paradifferential operator as done in Section \ref{s:paradifferential} regarding $\omega$ as a fixed parameter.\\
To be precise, for any $u\in C^{2+r}_*$ we define
\begin{align*}
m_{k}^{u,\alpha}(\omega,x)&=\int_0^1 dt \frac{\partial F}{\partial \zeta_{\alpha}}(\omega,D^2\Psi_k(D)u+tD^2\psi_{k+1}(D)u)\,,\\
M^{\alpha}_F(\omega,u;x,\xi)&=\sum_{k=0}^{\infty}m_{k}^{u,\alpha}(\omega,x) \psi_{k+1}(\xi)\xi^{\alpha}\,;
\end{align*}
similarly one define $M_F(\omega,u;x,\xi)$ and the realization of the paradifferential operator $M_F(\omega,\cdot)$. Before proceeding further, we prove the following measurability result.
\begin{lemma}
\label{l:measurability}
Under the hypotheses (S1)-(S4), the following holds:
\begin{itemize}
\item[i)] For each $u\in W^{s+2-2/	q}_q$ and $v\in H^{s+2}_q$,
\begin{align*}
M_F(u)v:\Omega &\rightarrow H^s_q\,,\\ 
\omega&\mapsto M_F(\omega,u)v\,,
\end{align*}
is $\f_0$-strongly measurable.
\item[ii)] For each $u\in W^{s+2-2/	q}_q$, the map
\begin{align*}
G(\cdot,u):\Omega &\rightarrow H^s_q\,,\\ 
\omega&\mapsto-F(\omega,x,D^2\Psi_0(D)u)\,,
\end{align*}
is $\f_0$-strongly measurable.
\end{itemize}
\end{lemma}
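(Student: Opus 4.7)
The plan is to invoke the Pettis measurability theorem. Since $H^s_q(\R^d)$ is separable, a uniformly $H^s_q$-bounded map $\Omega \to H^s_q$ is strongly $\f_0$-measurable iff its scalar pairings $\omega \mapsto \langle f(\omega), \varphi \rangle$ are $\f_0$-measurable for every $\varphi$ in a countable dense subset of $C^\infty_c(\R^d) \subset (H^s_q)^*$. Both statements will be reduced to joint $\f_0 \otimes \B(\R^d)$-measurability of a suitable integrand, with (S4), together with a Taylor expansion, serving as the source of measurability in $\omega$.

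For part (ii), write $w(x) := D^2 \Psi_0(D) u(x)$, a deterministic smooth function with all derivatives uniformly bounded (since $\Psi_0 \in C^\infty_0$ and $u$ does not depend on $\omega$). The identity
\[
F(\omega,x,w(x)) = F(\omega,x,0) + \sum_{|\alpha|\leq 2} w_\alpha(x) \int_0^1 \frac{\partial F}{\partial\zeta_\alpha}(\omega, x, tw(x))\, dt,
\]
combined with (S4), continuity of $F$ in $(x,\zeta)$ from (S2), and Fubini in $t\in[0,1]$, shows that each summand on the right is jointly $\f_0\otimes\B(\R^d)$-measurable; the leading term is so by (S2), which makes $F(\cdot,0)$ strongly $\f_0$-measurable into $H^s_q$. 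Thus $(\omega,x) \mapsto F(\omega,x,w(x))$ is jointly measurable, and for each $\varphi \in C^\infty_c$ a second application of Fubini yields the $\f_0$-measurability of the scalar pairing. Uniform $H^s_q$-boundedness in $\omega$ follows from (S2) and property (P2) applied $\omega$-pointwise, and Pettis closes the argument.

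For part (i), fix $u \in W^{s+2-2/q}_q$, $v \in H^{s+2}_q$, and set
\[
S_N(\omega) := \sum_{|\alpha|\leq 2}\sum_{k=0}^N m_k^{u,\alpha}(\omega,\cdot)\,\psi_{k+1}(D)D^\alpha v.
\]
Exactly as in (ii), (S4) together with Fubini in $t$ gives joint measurability of $(\omega,x)\mapsto m_k^{u,\alpha}(\omega,x)$, which survives multiplication by the deterministic function $\psi_{k+1}(D)D^\alpha v$; testing against $\varphi\in C^\infty_c$ and using Fubini, combined with the uniform $H^s_q$-bound on $S_N$ supplied by Lemma \ref{l:uniform boundness} and Proposition \ref{prop:bound 1,1}, Pettis renders each $S_N$ strongly $\f_0$-measurable. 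The tail $M_F(\omega,u)v - S_N(\omega)$ is the elementary pseudodifferential operator with symbol $\sum_{k>N} m_k^{u,\alpha}(\omega,\cdot)\psi_{k+1}(\xi)\xi^\alpha$ applied to $v$; by the same two results this operator is bounded $H^{s+2}_q \to H^s_q$ uniformly in $N$ and $\omega$, and applied to $v \in H^{s+2}_q$ its $H^s_q$-norm tends to zero thanks to the dyadic frequency localization of $\psi_{k+1}$ together with the Littlewood-Paley characterization of $H^{s+2}_q$. Pointwise convergence preserves strong measurability, completing the proof. The main obstacle is precisely this last decay step, since Proposition \ref{prop:bound 1,1} furnishes only uniform boundedness of the tails; the quantitative decay must be extracted from the frequency support of the $\psi_{k+1}$'s and the high-frequency summability encoded in $v \in H^{s+2}_q$.
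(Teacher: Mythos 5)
Your proof is correct, and for part (i) it takes a genuinely different route from the paper. Both arguments start from the Pettis measurability theorem, but the paper handles the infinite sum over $k$ by first assuming $v\in\S(\R^d)$, so that the pairing $\<M_F(\omega,u)v,f\>$ is a convergent (Riemann) oscillatory integral and hence a pointwise limit of $\omega$-measurable Riemann sums, and then passes to general $v\in H^{s+2}_q$ by density of $\S(\R^d)$ and the uniform bound $M_F(\omega,u)\in\B(H^{s+2}_q,H^s_q)$. You instead keep $v$ general and truncate the symbol series in $k$; your tail estimate does go through, since $\psi_{k+1}\Psi_N=0$ for $k>N$ gives $(\mathrm{tail}_N)v=(\mathrm{tail}_N)(v-\Psi_N(D)v)$, and the uniform operator bound from Proposition \ref{prop:bound 1,1} applied to $v-\Psi_N(D)v\to0$ in $H^{s+2}_q$ yields the required decay, so the Littlewood--Paley argument you sketch is exactly what is needed. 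The paper's route buys simplicity (no quantitative tail control, only boundedness plus density), while yours avoids the approximation in $v$ altogether. For part (ii) your Taylor expansion around $\zeta=0$ is actually more careful than the paper's one-line appeal to Riemann-integral convergence: hypothesis (S4) only postulates $\f_0$-measurability of the derivatives $\partial F/\partial\zeta_\alpha$, not of $F$ itself, and your decomposition into $F(\omega,x,0)$ (covered by (S2)) plus integrals of the derivatives (covered by (S4) and a Carath\'eodory-type joint measurability argument) closes that small gap explicitly.
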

\begin{proof} 
We recall some basic facts which we will use freely in the proof of the Lemma. For each $r\in(1,\infty)$ and $p\in(1,\infty)$, we have there is a natural identification $(H^s_p)^*=H^{-s}_{p'}$ (where $1/p+1/p'=1$) and the Schwartz class $\S(\R^d)$ is dense in $H^{r}_{p}$ (see \cite{Interpolation}). In particular, $\S(\R^d)\subset (H^s_p)^*$ is weak*-dense.\\
In the following $\<\cdot,\cdot\>$ denotes the pairing in the duality between $H^s_p$ and $H^{-s}_{p'}$.\\
Lastly, we recall that the pointwise convergence preserve measurability.\\
i) By Pettis measurability Theorem (see \cite{Analysis1}) it is enough to show that, for each $f\in \S(\R^d)$, the maps $\omega \mapsto \<M_F(\omega,u)v,f\>$ is $\f_0$-measurable.\\
We first prove the claim under the additional hypothesis $v\in \S(\R^d)$. For such $v$, 
\begin{multline}
\label{eq:riemann integral}
\<M_F(\omega,u)v,f\> =\int_{\R^d} \int_{\R^d} e^{2\pi i \xi \cdot x} M_F(\omega,u;x,\xi)\F(v)(\xi)f(x)\,d\xi\,dx\\
=\sum_{|\alpha|\leq 2}\int_{\R^d} \int_{\R^d}e^{2\pi i \xi \cdot x} \left(\sum_{k=0}^{\infty} m^{u,\alpha}_k(\omega,x)\psi_{k+1}(\xi)\xi^{\alpha}\right)\F(v)(\xi)f(x)d\xi\,dx\,,
\end{multline}
where, as before,
\begin{equation*}
m^{u,\alpha}_k(x,\omega)=
\int_0^1\frac{\partial F}{\partial \zeta_{\alpha}}(\omega,\Psi_k(D)D^2u + t\psi_{k+1}(D)u)\,dt\,.
\end{equation*}
Recall that, by hypothesis (S2), for all $\omega\in\Omega$ the map $(x,\zeta)\mapsto F(\omega,x,\zeta)$, then the last integral in (\ref{eq:riemann integral}) converges as a Riemann integral. This implies that $\omega\mapsto\<M_F(\omega,u)v,f\>$ is $\f_0$-measurable.\\
If $v\in H^{s+2}_q$, then choose a sequence of $\{v_n\}_n\subset \S(\R^d)$, then
\begin{equation*}
\<M_F(\omega,u)v,f\> =\lim_{n\nearrow \infty} \<M_F(\omega,u)v_n,f\>\,, \qquad \forall \omega\in \Omega\,,
\end{equation*}
since $M_F(\omega,u)\in \B(H^{s+2}_q,H^s_q)$. Then the claim follows.\\
\\
ii). By the same argument, it is enough to show that for each $f\in \S(\R^d)$, then the map $\omega \mapsto \<G(\omega,u),f\>$ is $\f_0$-measurable. Since,
\begin{equation*}
 \<G(\omega,u),f\>=\int_{\R^d} - f(x)\,F(\omega,x,D^2\Psi_0(D)u)\,dx\,,
\end{equation*}
the claim follows as in i).
\end{proof}
Before stating our main result, we recall some basic notation and definitions.
\begin{definition}[$\f$-cylindrical Brownian motion]
Let $\H$ be an Hilbert space. A bounded linear operator $W_H:L_2(\R^+;\H)\rightarrow L_2(\Omega)$ is called an $\f$-cylindrical Brownian motion, if the following are satisfied:
\begin{itemize}
\item[i)] For all $f\in L_2(\R^+;\H)$, then $W_H(f)$ is a centred Gaussian random variable.
\item[ii)] For all $t\in \R^+$ and $f\in L_2(\R^+;\H)$ with support in $[0,t]$, $W_H(f)$ is $\f_t$-measurable.
\item[iii)] For all $f_1,f_2\in L_2(\R^+;\H)$, then $\E(W_H(f_1)W_H(f_2))=[f_1,f_2]_{L_2(\R^+;\H)}$. 
\end{itemize} 
\end{definition}
\begin{example}
\label{ex:1 dimensional Brownian motion}
(One dimensional Brownian motion). Let $\{\beta_t\}_{t\in \R^+}$ be a standard Brownian motion on $\Omega$, it can be viewed as a $\f^b$-cylindrical Brownian motion ($\f^b$ has already been defined in Section \ref{s:introduction}). Indeed, we identify $\{\beta_t\}_{t\in \R^+}$ with
\begin{equation*}
W_{\R}^b(\1_{[0,t]}):=\beta_t\,, \qquad t\in \R^+\,.
\end{equation*}  
Note that, $W_{\R}^b$ is uniquely identified by the previous formula.
\end{example}
\begin{example}
(Space-time white noise). Any space-time white noise $W$ on $\R^d$ defines a cylindrical Brownian motion on $L_2(\R^d)$ and vice versa by the formula:
\begin{equation*}
W_{L_2(\R^d)}(\1_{[0,t]}\otimes \1_{B})=W(t,B)\,,
\end{equation*}
where $t\in \R^+$ and $B\subset \R^d$ is a Borel set of finite measure.
\end{example}
Further examples can be found in \cite{QuasilinearStochastic,InternetSeminar,StochasticEvolution}.\\
\\
In the following definition, $\{\gamma_n\}_{n\in \mathbb{N}}$ is a sequence of independent standard Gaussian random variable on some probability space $(\Omega',\A',\P')$.
\begin{definition}[$\gamma$-radonifying operators]
\label{def:gamma radonifying}
As before $\H$ is an Hilbert space and let $X$ be a reflexive Banach space. Then a bounded linear operator $T\in \B(\H,X)$ is said to be $\gamma$-radonifying (or briefly $T\in \gamma(\H,X)$) if
\begin{equation*}
\sup \E' \Big|\sum_{k=1}^n \gamma_k Th_k\Big|^2_X<\infty\,,
\end{equation*}
where the supremum is taken over all finite orthonormal systems $\{h_k\}_{k=1}^n$ in $\H$.
\end{definition}
The hypothesis of reflexivity in Definition \ref{def:gamma radonifying} is not necessarily for defining $\gamma$-radonifying operators; we will not need this here and we refer to \cite{Analysis2} for more on this topic.\\
\\
To prove Theorem \ref{t:wellposedness stochastic} we first analyse the well posedness of the following abstract quasilinear evolution equation:
\begin{equation}
\label{eq:stochastic abstract}
\begin{cases}
du+M_F(u)u\,dt = (G(u)+K(t,u)+g)dt + (B(t,u)+b)dW_{\H}\,,\\
u(0)=u_0\,.
\end{cases}
\end{equation}
for $t\in[0,T]$. Where:
\begin{itemize}
\item[(S6)] $K:\Omega \times [0,T]\times W^{s+2-2/q}_q \rightarrow H^s_q$ is strongly measurable and the map $\omega\mapsto K(\omega,t,x)$ is for all $t\in [0,T]$ and $x\in H^{s+2}_q$ strongly $\f_t$-measurable.
Moreover, for all $n\in \N$ there exists $L_n^K>0$, such that $K$ is locally Lipschitz continuous, i.e.
\begin{equation*}
|K(\omega,t,x)-K(\omega,t,y)|_{H^s_q}\leq L_n^K |x-y|_{W^{s+2-2/q}_q}\,.
\end{equation*}
\item[(S7)] $B:\Omega \times [0,T]\times W^{s+2-2/q}_q \rightarrow \gamma(\H,H^{s+1}_q)$ is strongly measurable and the map $\omega\mapsto B(\omega,t,x)$ is for all $t\in [0,T]$ and $x\in H^{s+2}_q$ strongly $\f_t$-measurable.
Moreover, for all $n\in \N$ there exists $L_n^B>0$, such that $K$ is locally Lipschitz continuous, i.e.
\begin{equation*}
|B(\omega,t,x)-B(\omega,t,y)|_{\gamma(\H,H^{s+1}_q)}\leq L_n^B |x-y|_{W^{s+2-2/q}_q}\,.
\end{equation*}
\item[(S8)] The functions $f:\Omega \times [0,T]\rightarrow H^s_q$ and $b:\Omega \times[0,T]\rightarrow \gamma(\H,H^{s+1}_q)$ are strongly measurable and adapted to $\f$. Moreover, 
$$
f\in L^q(\Omega\times [0,T];H^s_q)\,, \quad b\in L^q(\Omega \times [0,T];\gamma(\H,H^{s+1}_q))\,.
$$
\end{itemize}
For the problem (\ref{eq:stochastic abstract}) we have the following notion of maximal defined solution, which is a adaptation in our situation of Definition 4.1-4.2 in \cite{QuasilinearStochastic}; for the definition of stochastic integrability see \cite{StochasticIntegration}.
\begin{definition}[Maximal local solution for (\ref{eq:stochastic abstract})]
\label{def:solution stochastic}
Let $n\in \N$ and let $\sigma,\sigma_n,$ be $\f$-stopping times with $0\leq \sigma,\sigma_n\leq T$ almost surely.\\ 
Let $u:\Omega \times [0,\sigma)\rightarrow H^s_q$ (here $\Omega\times [0,\sigma):=\{(\omega,t)\in \Omega\times [0,T]\,:\,0\leq t<\sigma(\omega)\}$) strongly measurable and adapted.
\begin{itemize}
\item[i)] We say that $(u,\{\sigma_n\}_n,\sigma)$ is a \textit{local solution} of (\ref{eq:stochastic abstract}), if $\{\sigma_n\}_n$ is an increasing sequence with $\lim_{n\rightarrow\infty}\sigma_n=\sigma$ pointwise almost surely and for all $n\in \N$ we have
\begin{equation*}
u(\omega,\cdot)\in L_q(0,\sigma(\omega);H^{s+2}_q)\cap C([0,\sigma_n(\omega)];W_q^{s+2-2/q})\,,
\end{equation*} 
for almost all $\omega\in \Omega$. Moreover, for each $n\in \N$, $\1_{[0,\sigma_n]}B(u)$ is stochastically integrable and the identity
\begin{multline*}
u(t)-u_0 +\int_{0}^t M_F(u(s))u(s)\,ds\\ 
= \int_{0}^t G(u(s))+K(s,u(s))+g(s))\,ds + \int_0^t( B(u(s))+b(s))\,dW_{\H}(s)\,,
\end{multline*}
holds for almost all $\omega\in \Omega$ and all $t\in [0,\sigma_n(\omega)]$.\\
Furthermore, we say that the local solution is \textit{unique}, if for every local solution $(v,\{\tau_n\}_n,\tau)$ satisfies $u(\omega,t)=v(\omega,t)$ for all $\omega\in \Omega$ and $t\in [0,\min\{\sigma(\omega),\tau(\omega)\})$.
\item[ii)] We say that $(u,\sigma_n,\sigma)$ is a \textit{maximal unique local solution}, if for any other local solution $(v,\{\tau_n\}_n,\tau)$, we have almost surely $\tau\leq \sigma$ and $u(\omega,t)=v(\omega,t)$ for all $\omega\in \Omega$ and $t\in [0,\sigma(\omega))$.
\end{itemize}
\end{definition}
Let $I\subset \R$ an interval, then $BUC(I;X)$ means the space of all bounded uniformly continuous function with value in $X$. We are now in position to prove the main result of this section:
\begin{theorem}
\label{t:wellposedness stochastic refined}
Under the hypotheses (S1)-(S8), the problem (\ref{eq:stochastic abstract}) has a maximal unique local solution $(\sigma,\{\sigma_n\}_n,u)$, $($see Definition $(\ref{def:solution stochastic}))$. Moreover, the following blow-up criterion holds:
\begin{equation}
\label{eq:stochasticblow-up criterion}
\P\{\sigma<T\,,\,|u|_{L_q(0,\sigma;H^{s+2}_q)}<\infty\,,\,u\in BUC([0,\sigma);W_q^{s+2-2/q})\}=0\,.
\end{equation}
\end{theorem}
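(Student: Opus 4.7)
The strategy is to recast problem (\ref{eq:stochastic abstract}) as an abstract quasilinear stochastic evolution equation on the pair $(H^s_q,\,H^{s+2}_q)$ with trace space $W^{s+2-2/q}_q$, and invoke the abstract local existence and maximality theorem of \cite{QuasilinearStochastic}. That theorem requires three ingredients: (a) measurability/predictability of the coefficients, (b) local Lipschitz continuity of $u \mapsto M_F(u),\, G(u),\, K(\cdot,u),\, B(\cdot,u)$ into $\mathcal{B}(H^{s+2}_q,H^s_q)$, $H^s_q$, $H^s_q$ and $\gamma(\mathfrak{H},H^{s+1}_q)$ respectively, and (c) uniform stochastic maximal $L_q$-regularity of the linearized operator $\lambda_n + M_F(u)$ on balls $B_{s,q}(0,n)$.

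Ingredients (a) and (b) are essentially already available. Lemma \ref{l:measurability} gives the pointwise $\f_0$-strong measurability of $M_F(\cdot,u)v$ and $G(\cdot,u)$; since the whole construction of $M_F$ and $G$ is purely pathwise, combining this with (S4), (S6)--(S8) upgrades to the joint strong measurability and $\f$-adaptedness required by the iteration scheme in \cite{QuasilinearStochastic}. For (b), properties (P1)--(P2) established in Section \ref{s:paradifferential} give the desired local Lipschitz bounds pathwise, hence also in the $\omega$-dependent setting because their proofs rely solely on the bounds postulated in (S2)--(S3), which are uniform in $\omega$; the Lipschitz continuity of $K$ and $B$ is directly assumed in (S6)--(S7).

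The heart of the proof is (c). Applying Proposition \ref{prop:Hinfinite M} pathwise, for each $n\in \mathbb{N}$ one obtains $\lambda_n>0$, $\phi_n<\pi/2$ and $M_n>0$ such that $\lambda_n + M_F(\omega,u)\in \mathcal{H}^{\infty}(H^s_q)$ with $\mathcal{H}^{\infty}$-angle at most $\phi_n$ and $\mathcal{H}^{\infty}$-bound at most $M_n$, uniformly over $u\in B_{s,q}(0,n)$ and $\omega\in\Omega$ (the $\omega$-uniformity is automatic: every constant produced in Section \ref{s:paradifferential} depends only on the bounds in (S2)--(S3), which are $\omega$-independent). Since hypothesis (S1) enforces $q\geq 2$, the space $H^s_q$ is a UMD Banach space of type $2$; in this setting the results of \cite{StochasticMaximal,StochasticEvolution} guarantee that bounded $\mathcal{H}^{\infty}$-calculus with angle strictly less than $\pi/2$ implies stochastic maximal $L_q$-regularity, again uniformly over $u\in B_{s,q}(0,n)$. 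This is exactly the point at which the stochastic problem genuinely needs the full strength of Proposition \ref{prop:Hinfinite M}, whereas the deterministic argument in Subsection \ref{ss:deterministic proof} only exploited its weaker consequence.

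With (a)--(c) in hand, the abstract quasilinear theorem of \cite{QuasilinearStochastic} (a contraction argument in the stochastic maximal regularity space after freezing the coefficient along a suitable iteration) produces the unique maximal local solution $(u,\{\sigma_n\}_n,\sigma)$ in the sense of Definition \ref{def:solution stochastic}. The blow-up criterion (\ref{eq:stochasticblow-up criterion}) then follows from the same abstract framework by a contradiction argument: on the event $\{\sigma<T,\,|u|_{L_q(0,\sigma;H^{s+2}_q)}<\infty,\,u\in BUC([0,\sigma);W^{s+2-2/q}_q)\}$ the solution would extend continuously up to $\sigma$ with a terminal value in the trace space $W^{s+2-2/q}_q$, and one could restart local existence past $\sigma$, contradicting maximality. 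The principal technical obstacle is really only (c): securing the $\omega$-uniform $\mathcal{H}^{\infty}$-calculus with angle below $\pi/2$ on balls; once this is granted, the rest of the proof is a direct stochastic counterpart of the deterministic argument.
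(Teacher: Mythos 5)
Your proposal is correct and follows essentially the same route as the paper: both reduce the statement to the abstract quasilinear result of Hornung (Theorem 4.11 in \cite{QuasilinearStochastic}), verifying its hypotheses via Lemma \ref{l:measurability}, properties (P1)--(P2), the $\omega$-uniform version of Proposition \ref{prop:Hinfinite M} (which yields stochastic maximal $L_q$-regularity since $q\geq 2$), and assumptions (S5)--(S8), with the blow-up criterion inherited from the same abstract framework.
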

Before starting the proof, recall that
\begin{equation*}
[H^s_q,H^{s+2}_q]_{1/2}=H^{s+1}_q\,, \quad (H^s_q,H^{s+2}_q)_{1-q/q,q}=W^{s+2-2/q}_q\,;
\end{equation*}
see for instance \cite{Interpolation,Analysis1}.
\begin{proof}[Proof of Theorem \ref{t:wellposedness stochastic refined}]
The proof consists in an application of Theorem 4.11 in \cite{QuasilinearStochastic}; we are left to verify the conditions [Q1]-[Q3], [Q4*]-[Q7*], [Q8]-[Q9] in Section 4 of \cite{QuasilinearStochastic}.\\
Since $H^s_q$ is isomorphic to $L_q$ and $q\geq 2$ by (S1) (as noted before the pseudodifferential operator $\Lambda^s$ gives such isomorphism), then the condition [Q1] is satisfied.\\
The condition [Q2] is indeed hypothesis (S5), instead [Q3] follows from Lemma \ref{l:measurability}.\\
Conditions [Q4*] and [Q5*] are implied respectively by Proposition \ref{prop:Hinfinite M} and property (P1); moreover [Q7*] follows from property (P2) and hypothesis (S6).\\ 
Lastly, [Q7*] follows by (S7), [Q8] is automatically verified by our assumption and [Q9] is indeed hypothesis (S8). 
\end{proof}
It is clear that, Corollary \ref{t:wellposedness stochastic} is a trivial consequence of Theorem \ref{t:wellposedness stochastic refined}; we omit the details.\\
\\
We conclude this section with some comments on the lower order nonlinearities $K,B$ appering in (\ref{eq:stochastic abstract}). Indeed, the hypotheses on this terms in \cite{QuasilinearStochastic} ([Q6*]-[Q7*] in Subsection 4.2) are weaker then the our. We choose to use the stronger assumptions (S6)-(S7) for two reason. Firstly, our focus is on the fully nonlinearity $F(x,D^2u)$ rather than "lower order" terms; secondly if we use the weaker assumptions in \cite{QuasilinearStochastic} then it forces us to explain other notions which can be misleading for the reader.\\
The interested reader can easily relax the hypothesis (S6)-(S7), thanks to the result of abstract quasilinear parabolic evolution equations in \cite{QuasilinearStochastic}.
\section{Extensions}
\label{s:extensions}
\subsection{High Order and non-Autonomous fully Nonlinear (S)PDEs}
\subsubsection{High Order Fully Nonlinear (S)PDEs}
The approach followed in Section \ref{s:proof of the main results}, to prove existence of a solution, can be extended to high order fully nonlinear parabolic PDEs. For instance, one could replace $F(x,D^2 u)$ by $F(x,D^m u)$ in (\ref{EQ}), (\ref{eq:stochastic intro}) (where $m\in 2\mathbb{N}$) and the strongly parabolicity hypothesis (H4) by:
\begin{itemize}
\item For all $n\in \N$, there exists $c'_n>0$ such that
\begin{equation*}
\sum_{|\alpha|=m}\frac{\partial F}{\partial \zeta_{\alpha}}(x,\zeta)\,\xi^{\alpha} \geq c'_n |\xi|^m\,,\qquad \forall \xi\in \mathbb{R}^d\,;
\end{equation*}
for all $x\in \R^d$ and $|\zeta|<n$.
\end{itemize}
With clear adaptation of hypothesis (H4') in Subsection \ref{ss:deterministic proof} and (S3)-(S4) in Subsection \ref{ss:stochastic proof}.\\
Then, Theorems \ref{t:well posedness} and \ref{t:wellposedness stochastic refined} still holds and for the adapted version of the respectively hypotheses.\\
Indeed, the construction of the paradifferential operator also holds in this case (see \cite{Taylor} for more details) although one has to choose $q$ large enough to make valid the embedding
$$
(H^{s+m}_q,H^{s}_q)_{1-1/q,q}\hookrightarrow C^{m+r}_*\,,
$$
for some $r>0$. Moreover, the analysis of Section \ref{s:proof of the main results} can be carried over in this case.\\
Finally, it is worthwhile to note that $F(x,D^m u)$ may depend on $\omega\in\Omega$ as in hypothesis (S4) in Subsection \ref{ss:stochastic proof} and Lemma \ref{l:measurability} still holds. We omit the details.
\subsubsection{Non autonomous Fully Nonlinear (S)PDEs}
The treatment developed in Sections \ref{s:H infinite bounded}-\ref{s:paradifferential} can be also extended to the non autonomous cases. Indeed, for $m\in 2\N$, one can replace the decomposition (\ref{eq:intro decomposition}) by
\begin{equation*}
F(t,x,D^m u)= M_F(u;t,x,D)u + F(t,x,D^m\Psi_0(D)u)\,.
\end{equation*}
Here $M_{F}(u;t,x,\xi)$ is defined as in Section \ref{s:paradifferential} considering $t$ as a parameter.\\
Denoting with $M_{F}(u,t)$ the realization of $M_{F}(u;t,x,D)$ on $H^s_q$, the problem (\ref{EQ}) with a time-depending $F$ can be rewritten as
\begin{equation}
\label{eq:non autonomous}
\begin{cases}
u' + M_{F}(u,t)u = G(u,t)\,, \quad\,\,t>0\,,\\
\qquad u(0)=u_0\,;
\end{cases}
\end{equation}
where $G(u,t):=-F(t,x,D^2\Psi_0(D)u)$ and similarly for the stochastic case.\\
The treatment of non autonomous equations as in (\ref{eq:non autonomous}), is not as known as the autonomous case, so we limit ourself to autonomous case, although the fully nonlinear parabolic (stochastic) problem with time dependent $F$ can be analysed as soon as one has got results on quasilinear non-autonomous abstract parabolic evolution equations.
\section{Comments}
\label{s:comments}
We now move to compare our approach to other known results. A quite amount of work with fully nonlinear parabolic partial differential equations is done in \cite{Lunardi} (see also \cite{DaPrato-maximal}); but the approach taken here is completely different and the results appear not in the form of Theorem \ref{t:well posedness}. Indeed, some application of the theory are more suited for H\"{o}lder regularity rather than Sobolev regularity.\\ 
We mention that the results of \cite{Lunardi} are suited to deal with fully nonlinear equations on domains with boundary, instead our approach does not seem to be so flexible.\\
Our approach is more similar to the one deviced in \cite{Taylor} Chapter 15 Section 8. Although there a paradifferential technique is used, the way to produce local existence is completely different. Indeed, the the local existence is proven by a Gal\"{e}rkin method and a compactness argument.\\
\\
In the context of stochastic partial differential equations, our method to prove the existence of a maximal defined solution (to our knowledge) appears to be new. Moreover, it permits us to consider a very general noise (see Subsection \ref{ss:stochastic proof}), instead in \cite{DaPrato,P.L.Lions,P.L.Lions2} the driving process is an $m$-dimensional Brownian motion.\\
\\
Lastly, the approach taken here seems to be suitable for studying fully nonlinear parabolic (S)PDEs on a closed Riemannian manifold. Indeed, miming the localization technique used Chapter 6 Section 6.5 of \cite{Pruss} one can reduce the proof of the existence of a fully nonlinear (S)PDEs on a closed manifolds to an equation of the form (\ref{eq:stochastic intro}) or (\ref{eq:stochastic abstract}) with a vector valued $F$. This would be a very interesting distinguish fact of our approach, since to our knowledge this is not already studied.

\end{document}